\providecommand{\tabularnewline}{\\}
\numberwithin{equation}{section}
\numberwithin{figure}{section}
\theoremstyle{plain}
\newtheorem{thm}{\protect\theoremname}
\theoremstyle{plain}
\newtheorem{prop}[thm]{\protect\propositionname}
\theoremstyle{plain}
\newtheorem{lem}[thm]{\protect\lemmaname}
\theoremstyle{definition}
\newtheorem{example}[thm]{\protect\examplename}
\theoremstyle{definition}
\newtheorem{defn}[thm]{\protect\definitionname}
\newlist{casenv}{enumerate}{4}
\setlist[casenv]{leftmargin=*,align=left,widest={iiii}}
\setlist[casenv,1]{label={{\itshape\ \casename} \arabic*.},ref=\arabic*}
\setlist[casenv,2]{label={{\itshape\ \casename} \roman*.},ref=\roman*}
\setlist[casenv,3]{label={{\itshape\ \casename\ \alph*.}},ref=\alph*}
\setlist[casenv,4]{label={{\itshape\ \casename} \arabic*.},ref=\arabic*}
\providecommand{\casename}{Case}
\providecommand{\definitionname}{Definition}
\providecommand{\examplename}{Example}
\providecommand{\lemmaname}{Lemma}
\providecommand{\propositionname}{Proposition}
\providecommand{\theoremname}{Theorem}
\begin{document}
\title{Number of moduli for an union of smooth curves in $\left(\mathbb{C}^{2},0\right)$.}
\author{Yohann Genzmer}
\begin{abstract}
In this article, we provide an algorithm to compute the number of
moduli of a germ of curve which is an union of germs of smooth curves
in the complex plane.
\end{abstract}

\maketitle

\section*{Introduction}

The problem of the determination of the number of moduli of a germ
of complex plane curve was addressed by Oscar Zariski in its famous
notes \cite{zariski}, where he focused on the case of a curve with
only one irreducible component. Since then, a lot of progress has
been made, and, up to our knowledge, the initial question can be considered
as mostly solved in the combination of the works of A. Hefez and M.
Hernandes \cite{MR2509045,MR2781209,MR2996882} and these of the author
\cite{YoyoBMS}. 

In this article, we proposed an approach of the general reducible
case in line with \cite{YoyoBMS}. From the study the module of vector
fields tangent to a curve $S$, which we refer to as the Saito module
of $S,$ we propose a formula for the generic dimension of the moduli
space of $S$ that can be easily implemented. The associated algorithm
is build upon the desingularization process of $S$, for which we
have already at our disposal, some classical and available routine
on many symbolic computation softwares. 

\section{Notation}

\global\long\def\un{\mathfrak{1}}%
\global\long\def\deux{\mathfrak{2}}%
Let $S$ be a germ of curve in the complex plane. According to \cite{zariskitop},
there exists a minimal process $E$ of desingularization that consists
in a sequence of elementary blowing-ups of points. We denote it by
\[
E=E_{1}\circ E_{2}\circ\cdots\circ E_{N}:\left(\widetilde{\mathbb{C}^{2}},D\right)\to\left(\mathbb{C}^{2},0\right).
\]
Here, $D=E^{-1}\left(0\right)$ is the \emph{exceptional divisor}
of $E.$ The strict transform of $S$ by any process of blowing-ups
$F$ will be refered to as $F^{\star}S$. The decomposition $D$ in
irreducible components is written
\[
D=\bigcup_{i=1}^{N}D_{i}
\]
where $D_{i}$ is the exceptional divisor of the elementary blowing-up
$E_{i}.$

Let $\left\{ t_{2},\cdots,t_{M}\right\} \subset D_{1}$ be the tangency
locus between $E_{1}^{\star}S$ and $D_{1}.$ For any $k=2,\ldots,M$,
$S_{k}$ stands for the germ of the curve $E_{1}^{\star}S$ at $t_{k}.$
Doing inductively the same construction for each curve $S_{k}$, we
finally obtain a family of curves 
\[
\left(S_{k}\right)_{k=2,\ldots,N},
\]
whose numbering is chosen so that $E_{k}$ is the blowing-up centered
at the tangency locus between $S_{k}$ and the exceptional divisor.
By extension, we set $S_{1}=S.$ 

Subsequently, the notation $\nu\left(\square\right)$ will stand for
the standard valuation of the object $\square:$
\begin{itemize}
\item if $S$ is a germ of curve, then $\nu\left(S\right)$ is the algebraic
multiplicity of any reduced local equation of $S$.
\item if $X$ is a germ of vector field written in some coordinates $X=a\partial_{x}+b\partial_{y},$
then 
\[
\nu\left(X\right)=\min\left(\nu\left(a\right),\nu\left(b\right)\right).
\]
\end{itemize}
If any confusion is possible, we will precise the point $p$ where
the valuation is evaluated. The associated notation will be $\nu_{p}\left(\square\right)$.

Since $E^{\star}S$ is smooth and transverse to the exceptional divisor,
one can consider for any component $D_{k}$, the number $n_{k}^{S}$
of components of $E^{\star}S$ attached to $D_{k}.$ 

We say that $D_{i}$ is \emph{in the neighborhood of $D_{k}$} if
$i\neq k$ and $D_{i}\cap D_{k}\neq\emptyset.$ The set of all indexes
$i\in\left\{ 1,\ldots,N\right\} $ such that $D_{i}$ is in the neighborhood
of $D_{k}$ will be denoted by $\mathfrak{N}\left(k\right).$ 

For $i\geq2,$ the component $D_{i}$ is the blowing-up of a point
which belongs to, either a single component $D_{j}$ or to a couple
of components $D_{j}$ and $D_{k}.$ The associated set of indexes
$\left\{ j\right\} $ or $\left\{ j,k\right\} $ is called the set
of \emph{parents }of $D_{i}$ and will be denoted by $\mathfrak{P}\left(i\right).$
By extension, we set $\mathcal{\mathfrak{P}}\left(1\right)=\emptyset.$
Notice that for any $i=2,\text{\ensuremath{\ldots}},M,$ one has 
\[
\mathcal{\mathfrak{P}}\left(i\right)=\left\{ 1\right\} .
\]

Finally, we introduce the following notation 
\[
\underset{n\quad}{\left\lfloor \begin{array}{c}
a\\
b
\end{array}\right.}=\left\{ \begin{array}{cl}
a & \textup{if \ensuremath{n\in\mathbb{N}} is even}\\
b & \textup{else}
\end{array}\right..
\]

\section{Proximity matrix of germ of curve $S\subset\left(\mathbb{C}^{2},0\right)$:
a combinatorial remark. }

Let $P$ be\emph{ the proximity matrix }of\emph{ $S$ }as defined
in \cite{MR2107253} or \cite{YoyoBMS}. The curve $S$ being given,
let $\Delta^{S}=\left(\Delta_{i}^{S}\right)_{i=1,\ldots,N}$ be any
element in $\left\{ 0,1\right\} ^{N}.$ Denote by $\delta_{k}^{S}$
the integer 
\[
\delta_{k}^{S}=\textup{card}\left\{ \left.i\in\mathfrak{P}\left(k\right)\right|\Delta_{k}^{S}=1\right\} .
\]
We consider the following vector of integers
\[
\mathfrak{S}^{S}=\left(\frac{\nu\left(S_{k}\right)-\delta_{k}^{S}}{2}+\underset{\nu\left(S_{k}\right)-\delta_{k}^{S}}{\left\lfloor \begin{array}{c}
\Delta_{k}^{S}\\
\frac{1}{2}
\end{array}\right.}\right)_{k=1,\ldots,N}
\]

We introduce the system of equations $\left(\mathcal{H}\right)$ whose
unknown variables are the vectors $\mathcal{E}^{S}=\left(\begin{array}{c}
\epsilon_{1}^{S}\\
\vdots\\
\epsilon_{N}^{S}
\end{array}\right)\in\mathbb{N}^{N}$ and $\Delta^{S}=\left(\Delta_{i}^{S}\right)_{i=1,\ldots,N}$ defined
by 
\[
\left(\mathcal{H}\right):~P^{-1}\mathcal{E}^{S}=\mathfrak{S}^{S}
\]
A solution $\left\{ \mathcal{E}^{S},\Delta^{S}\right\} $ is \emph{admissible}
if it satisfies the following \emph{compatibility conditions} :\emph{
}for any $k=1,\ldots,N$ 
\[
\left(\star\right):\left\{ \begin{array}{ccl}
\Delta_{k}=1 & \Longrightarrow & \epsilon_{k}^{S}\geq n_{k}^{S}\\
\Delta_{k}=0 & \Longrightarrow & \epsilon_{k}^{S}\geq2-\sum_{i\in\mathfrak{N}\left(k\right)}\Delta_{i}^{S}
\end{array}\right..
\]

One can choose an arbitrary vector $\Delta^{S}$, compute the associated
values of the integers $\delta_{k}^{S}$ and obtain the vector $\mathfrak{S}^{S}$.
Then, the invertible system $\left(\mathcal{H}\right)$ provides a
unique corresponding solution $\mathcal{E}^{S}.$ However, there is
no reason for this solution $\left\{ \mathcal{E}^{S},\Delta^{S}\right\} $to
satisfy the compatibility conditions $\left(\star\right)$. Nevertheless,
we will prove that 
\begin{prop}
\label{ExistenceAndUnicity}If $S$ is an union of germs of smooth
curves, there exists a unique choice of $\Delta^{S}$ such that the
associated solution of $\left(\mathcal{H}\right)$ satisfies the compatibility
conditions $\left(\star\right).$
\end{prop}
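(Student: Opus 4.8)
The plan is to turn $(\mathcal H)$ into a recursion along the dual tree of the resolution $E$, rooted at $D_1$, and then to run an induction over that tree; for $k\ge2$ I write $\pi(k)$ for the unique element of $\mathfrak P(k)$.

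The first step is a topological lemma. Since every component of $S$ is smooth, its strict transform remains a smooth curve transverse to the exceptional divisor at every stage of $E$, so the locus on which the next centre is chosen always consists of points at which several strict transforms meet and never of a corner; hence \emph{every blow--up of $E$ is free}: $\mathfrak P(k)$ is a singleton for $k\ge2$ and $\mathfrak P(1)=\emptyset$. It follows that $\delta_k^S=\Delta_{\pi(k)}^S$, that $\mathfrak N(k)=\{\pi(k)\}\cup\{j:\pi(j)=k\}$, and that the resolution identities $\nu(S_k)=n_k^S+\sum_{\pi(j)=k}\nu(S_j)$ hold, with $\nu(S_j)\ge2$ for $j\ge2$ and $n_k^S=\nu(S_k)$ at every leaf. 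Because $P$ is unipotent and lower triangular for the blow--up order and has support on the diagonal and the edges of the tree, $(\mathcal H)$ is equivalent to $\mathcal E^S=P\mathfrak S^S$, is solved recursively along the tree, and gives $\epsilon_k^S=\mathfrak S_k^S-\sum_{\pi(j)=k}\mathfrak S_j^S$; since $\mathfrak S^S\in\mathbb Z^N$ this yields $\mathcal E^S\in\mathbb Z^N$ automatically, so admissibility only asks for $\mathcal E^S\ge0$ together with $(\star)$. A short parity discussion of the definition of $\mathfrak S_k^S$ shows that $\mathfrak S_k^S=\lceil\nu(S_k)/2\rceil$ except in (at most) two sporadic configurations of $(\Delta_k^S,\Delta_{\pi(k)}^S)$, where it is $\lceil\nu(S_k)/2\rceil\pm1$. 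Hence the whole system is ``radius one'' on the tree: the constraints sitting at $D_k$, namely $\epsilon_k^S\ge0$ and the relevant line of $(\star)$ at $k$, involve $\Delta^S$ only at $D_k$, at $\pi(k)$, and at the children of $k$.

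Next I would run the induction from the leaves. For a vertex $k$ and $b,c\in\{0,1\}$, let $\Phi_k(b,c)$ be the number of assignments of $\Delta^S$ to $D_k$ and its strict descendants with $\Delta_k^S=b$ that satisfy all constraints located at $k$ and below, where in the constraints located at $k$ one reads $\Delta_{\pi(k)}^S=c$. The statement to prove by induction is $(\mathrm{IH}_k)$: \emph{for each $c$ there is exactly one $b$ with $\Phi_k(b,c)=1$, the other value giving $0$.} Granting $(\mathrm{IH}_j)$ for the children $j$, the radius--one property collapses $\Phi_k(b,c)$ to a single Boolean --- ``do the constraints at $k$ hold when $\Delta_k^S=b$, $\Delta_{\pi(k)}^S=c$, and each child $j$ takes its unique value feasible against $\Delta_{\pi(j)}^S=b$?'' --- so $(\mathrm{IH}_k)$ reduces to checking that, for each $c$, exactly one of $b=0$ and $b=1$ makes this Boolean true. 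That check is the crux and is finite: one writes out $\epsilon_k^S$ in the two candidate configurations in terms of $\nu(S_k)$, $n_k^S$, the parities of the children's $\nu(S_j)$, and the data carried up from below (the value of $\mathfrak S_j^S$ and the forced value of $\Delta_j^S$ in the relevant child configuration), and one uses $\nu(S_j)\ge2$, the resolution identity $\nu(S_k)=n_k^S+\sum\nu(S_j)$, and the two sporadic values of $\mathfrak S$ to conclude that $\epsilon_k^S\ge0$ and the relevant line of $(\star)$ can hold for precisely one value of $b$. The base case is the leaf computation --- $\Delta_k^S$ is forced to $0$, except when $\nu(S_k)=2$ and $\Delta_{\pi(k)}^S=0$, where it is forced to $1$ --- and the root $D_1$ is handled exactly as an internal vertex but without the letter $c$, which converts ``exactly one $b$ for each $c$'' into the existence and uniqueness of an admissible $\Delta^S$.

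The hard part will be this per--vertex verification, because of the genuine two--way coupling between a vertex and its parent built into $\mathfrak S$: the value $\mathfrak S_j^S$ of a child sees $\Delta_k^S$ while $\mathfrak S_k^S$ itself sees $\Delta_{\pi(k)}^S$, and the line of $(\star)$ at $k$ attached to $\Delta_k^S=0$ simultaneously involves $\Delta_{\pi(k)}^S$ and the children's values. So one must be careful to propagate exactly the right auxiliary data along the tree and to verify that the elementary inequalities coming from the geometry of the resolution ($\nu(S_j)\ge2$, the resolution identity, and the two exceptional values of $\mathfrak S$) are just strong enough both to rule out every spurious second solution and to guarantee that one admissible solution always exists.
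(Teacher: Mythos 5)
Your reduction of $(\mathcal H)$ to a leaf--to--root recursion on the dual tree is sound, and the preliminary observations are correct: for a union of smooth branches every infinitely near point is free, so $\mathfrak P(k)=\{\pi(k)\}$, $\delta_k^S=\Delta_{\pi(k)}^S$, and $\epsilon_k^S=\mathfrak S_k^S-\sum_{\pi(j)=k}\mathfrak S_j^S$; your leaf computation also checks out. Structurally this is the same induction as the paper's, which recurses on the length of the desingularization and applies the proposition both to each $S_i$ and to each $S_i\cup D_1$ --- your parameter $c$ plays exactly the role of the added branch $D_1$. But the proposal stops where the proof has to start: the per--vertex claim that, for each parent state $c$, \emph{exactly one} of $b=0,1$ is feasible is first called ``a finite check'' and then deferred as ``the hard part'' without being carried out. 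It is not a routine case analysis, because the two candidate configurations $b=0$ and $b=1$ feed \emph{different} parent states to the children, so the children's forced values $\Delta_j^{(0)}$ and $\Delta_j^{(1)}$ are a priori unrelated; neither existence nor uniqueness follows from inspecting the two configurations separately, and your listed inequalities ($\nu(S_j)\ge 2$, the resolution identity, the two exceptional values of $\mathfrak S$) do not by themselves exclude the cases where both, or neither, of the two Booleans is true.

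The paper closes precisely this gap with two ingredients absent from your plan. First, a comparison lemma (Lemma \ref{lemma2}), proved from the explicit first row of $P$: if the unique admissible choice for a child subtree has $\Delta_j^{(0)}=0$ then also $\Delta_j^{(1)}=0$, and if $\nu(S_j)$ is odd and $\Delta_j^{(0)}=1$ then $\Delta_j^{(1)}=1$. This is what allows the parity--dependent mixture of $\Delta_j^{(0)}$ and $\Delta_j^{(1)}$ appearing in $\epsilon_k^{(1)}+\epsilon_k^{(0)}$ to be replaced by $\Delta_j^{(0)}$ alone. Second, the resulting exact identity (\ref{egaliteSuper}), namely $\epsilon_k^{(1)}+\epsilon_k^{(0)}=n_k^S+1-\sum_j\Delta_j^{(0)}$ at the root (with an extra $-c$ at an interior vertex), which makes the two admissibility conditions, $\epsilon_k^{(1)}\ge n_k^S$ and $\epsilon_k^{(0)}\ge 2-c-\sum_j\Delta_j^{(0)}$, exactly complementary: their thresholds sum to one more than the left--hand sides do, so precisely one of them holds. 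The complementarity is an identity, not an estimate, and it rests on the comparison lemma, which must itself be established and propagated up the tree since it relates the outcomes of your recursion for $c=0$ and $c=1$. Until you state and prove an analogue of that lemma inside your induction and derive the corresponding identity at each vertex, the crux of the proposition remains unproved.
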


To prove the above proposition, first, let us establish a lemma that
describes the behaviour of the system $\left(\mathcal{H}\right)$
when one goes from $S$ to $S\cup l$ where $l$ is a generic smooth
curve. From now on, we suppose that $S$ is an union of germs of smooth
curves. 
\begin{lem}
\label{lemma2} Let $l$ be a generic germ of smooth curve. 
\begin{itemize}
\item If there exists $\Delta^{S}$ with $\Delta_{1}^{S}=0,$ such that
the solution of $\left(\mathcal{H}\right)$ is admissible, then the
same $\Delta^{S}$ provides an admissible solution for the curve $S\cup l$. 
\item If $\nu\left(S\right)$ is odd and there exists $\Delta^{S}$ with
$\Delta_{1}^{S}=1,$ such that the solution of $\left(\mathcal{H}\right)$
is admissible, then the same $\Delta^{S}$ provides an admissible
solution for the curve $S\cup l$.
\end{itemize}
\end{lem}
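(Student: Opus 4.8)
The plan is to exploit the fact that adjoining a \emph{generic} smooth branch $l$ does not alter the minimal desingularization of $S$; consequently every ingredient of $\left(\mathcal{H}\right)$ and of $\left(\star\right)$ is inherited unchanged by $S\cup l$, with two completely explicit exceptions localized at the first blown-up point, so that admissibility can be read off directly. The one step requiring genuine care is the stability of the desingularization, i.e. the genericity argument; the rest is a parity bookkeeping and the linear-algebraic identity $Pe_{1}=e_{1}$.

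To see that $E$ is still the minimal desingularization of $S\cup l$: the hypothesis presupposes $N\geq1$, hence $\nu\left(S\right)\geq2$ and $E_{1}$ is the blowing-up of the origin; since $\nu\left(S\cup l\right)=\nu\left(S\right)+1\geq3$, the origin is still a non-normal-crossing point of $S\cup l$ and $E_{1}$ is still required. Because $l$ is generic, $E_{1}^{\star}l$ is a smooth curve meeting $D_{1}$ transversally at a point of $D_{1}$ lying on no component of $E_{1}^{\star}S$ (in particular distinct from $t_{2},\dots,t_{M}$); it is therefore already separated and imposes no further blowing-up, and since $E$ is minimal for $S$ it is minimal for $S\cup l$ as well. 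Hence $N$, the proximity matrix $P$, the sets $\mathfrak{P}\left(k\right)$ and $\mathfrak{N}\left(k\right)$, and the integers $\nu\left(S_{k}\right)$, $n_{k}^{S}$ for $k\geq2$ coincide for $S$ and $S\cup l$, whereas
\[
\nu\bigl(\left(S\cup l\right)_{1}\bigr)=\nu\left(S\right)+1,\qquad n_{1}^{S\cup l}=n_{1}^{S}+1 .
\]

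Next I would run the system $\left(\mathcal{H}\right)$ attached to $S\cup l$ with the same vector $\Delta:=\Delta^{S}$. Since the $\mathfrak{P}\left(k\right)$ and the entries of $\Delta$ are untouched, every $\delta_{k}$ is unchanged, so $\mathfrak{S}^{S\cup l}$ and $\mathfrak{S}^{S}$ agree in all coordinates $k\geq2$. In the first coordinate $\delta_{1}=0$ because $\mathfrak{P}\left(1\right)=\emptyset$, and distinguishing the parity of $\nu\left(S\right)$ in the bracket symbol yields $\mathfrak{S}^{S\cup l}_{1}=\mathfrak{S}^{S}_{1}+c$ with $c=1-\Delta_{1}^{S}$ when $\nu\left(S\right)$ is even and $c=\Delta_{1}^{S}$ when $\nu\left(S\right)$ is odd; in particular $c\in\left\{0,1\right\}$, $c=1$ in the situation of the second bullet and, in the situation of the first bullet, $c=1$ precisely when $\nu\left(S\right)$ is even. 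As $\left(\mathcal{H}\right)$ is a linear invertible system, $\mathcal{E}^{S\cup l}-\mathcal{E}^{S}=c\,Pe_{1}$, where $e_{1}$ denotes the first vector of the canonical basis; and since $D_{1}$ has no parent the first column $Pe_{1}$ of $P$ equals $e_{1}$. Therefore $\mathcal{E}^{S\cup l}=\mathcal{E}^{S}+c\,e_{1}$: only the first coordinate changes, and it increases by $c\geq0$, so in particular $\mathcal{E}^{S\cup l}\in\mathbb{N}^{N}$.

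It remains to check $\left(\star\right)$ for $S\cup l$. For every $k\geq2$ the numbers $\epsilon_{k}$, $n_{k}^{S}$, the set $\mathfrak{N}\left(k\right)$ and the family $\bigl(\Delta_{i}\bigr)_{i\in\mathfrak{N}\left(k\right)}$ are unchanged, so $\left(\star\right)$ still holds at $k$. At $k=1$: in the first bullet $\Delta_{1}=0$, and $\epsilon_{1}^{S\cup l}=\epsilon_{1}^{S}+c\geq\epsilon_{1}^{S}\geq2-\sum_{i\in\mathfrak{N}\left(1\right)}\Delta_{i}$, the right-hand side being exactly the one valid for $S$; in the second bullet $\Delta_{1}=1$ and $\nu\left(S\right)$ is odd, hence $c=1$ and $\epsilon_{1}^{S\cup l}=\epsilon_{1}^{S}+1\geq n_{1}^{S}+1=n_{1}^{S\cup l}$. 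In either case $\left(\star\right)$ holds at $k=1$, so $\left\{\mathcal{E}^{S\cup l},\Delta^{S}\right\}$ is admissible for $S\cup l$, which finishes the proof.
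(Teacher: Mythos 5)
Your proof is correct and follows essentially the same route as the paper's: keep the same $\Delta^{S}$, observe that only the first coordinate of $\mathfrak{S}^{S}$ changes (by $0$ or $1$ according to the parity of $\nu\left(S_{1}\right)$ and the value of $\Delta_{1}^{S}$), deduce that only $\epsilon_{1}^{S}$ changes by the same amount, and verify the compatibility conditions at $k=1$. The only differences are presentational — you justify explicitly that the desingularization data are unchanged and phrase the update as $\mathcal{E}^{S\cup l}=\mathcal{E}^{S}+c\,e_{1}$ via $Pe_{1}=e_{1}$, where the paper instead expands the first row of $P$ directly.
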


\begin{proof}
The lemma above can be seen on the behaviour of the system when $n_{1}$
is increased by $1.$ Since $S$ is an union of smooth curves, its
proximity matrix $P$ is written 
\[
P=\left(\begin{array}{ccccccc}
1 & -1 & \cdots & -1 & 0 & \cdots & 0\\
\vdots &  &  & \vdots &  &  & \vdots
\end{array}\right)
\]
with the number $-1$ repeated $M$ times on the first line. Therefore,
one can expand the expression of $\epsilon_{1}^{S}$ as below
\begin{equation}
\epsilon_{1}^{S}=\frac{\nu\left(S_{1}\right)-\delta_{1}^{S}}{2}+\underset{\nu\left(S_{1}\right)-\delta_{1}^{S}}{\left\lfloor \begin{array}{c}
\Delta_{1}^{S}\\
\frac{1}{2}
\end{array}\right.}-\sum_{k=2}^{M}\left(\frac{\nu\left(S_{k}\right)-\delta_{k}^{S}}{2}+\underset{\nu\left(S_{k}\right)-\delta_{k}^{S}}{\left\lfloor \begin{array}{c}
\Delta_{k}^{S}\\
\frac{1}{2}
\end{array}\right.}\right)\label{equation.fondamentale}
\end{equation}

By construction, $\delta_{1}^{S}=0.$ Now, if $\Delta_{1}^{S}=0,$
since the solution $\left\{ \mathcal{E}^{S},\Delta^{S}\right\} $
is admissible, one has $\epsilon_{1}^{S}\geq2-\sum_{i=2}^{M}\Delta_{i}^{S}.$
If $n_{1}$ is increased by one, it does not affect $\nu\left(S_{k}\right)$
for $k\geq2$ but it changes $\nu\left(S_{1}\right)$ into $\nu\left(S_{1}\right)+1.$
However, if $\nu\left(S_{1}\right)$ is even then
\[
\frac{\nu\left(S_{1}\cup l\right)}{2}+\underset{\nu\left(S_{1}\cup l\right)}{\left\lfloor \begin{array}{c}
0\\
\frac{1}{2}
\end{array}\right.}=\frac{\nu\left(S_{1}\right)}{2}+\underset{\nu\left(S_{1}\right)}{\left\lfloor \begin{array}{c}
0\\
\frac{1}{2}
\end{array}\right.}+1
\]
and if $\nu\left(S_{1}\right)$ is odd then 
\[
\frac{\nu\left(S_{1}\cup l\right)}{2}+\underset{\nu\left(S_{1}\cup l\right)}{\left\lfloor \begin{array}{c}
0\\
\frac{1}{2}
\end{array}\right.}=\frac{\nu\left(S_{1}\right)}{2}+\underset{\nu\left(S_{1}\right)}{\left\lfloor \begin{array}{c}
0\\
\frac{1}{2}
\end{array}\right.}
\]
Thus setting $\epsilon_{1}^{S\cup l}=\epsilon_{1}^{S}$ or $\epsilon_{1}^{S}+1$
depending on $\nu\left(S_{1}\right)$ being odd or even and 
\[
\begin{cases}
\Delta_{i}^{S\cup l}=\Delta_{i}^{S} & i=1,\ldots,N\\
\epsilon_{i}^{S\cup l}=\epsilon_{i}^{S} & i\neq2
\end{cases}
\]
 yields an admissible solution of the system $\left(\mathcal{H}\right)$
for $S\cup l$. 

Now, if $\nu\left(S_{1}\right)$ is odd and $\Delta_{1}^{S}=1$, then
one has 
\[
\epsilon_{1}^{S}=\frac{\nu\left(S_{1}\right)+1}{2}-\sum_{k=2}^{M}\left(\frac{\nu\left(S_{k}\right)-\delta_{k}^{S}}{2}+\underset{\nu\left(S_{k}\right)-\delta_{k}^{S}}{\left\lfloor \begin{array}{c}
\Delta_{k}^{S}\\
\frac{1}{2}
\end{array}\right.}\right).
\]
and $\epsilon_{1}^{S}\geq n_{1}^{S}$. The multiplicity $\nu\left(S_{1}\right)$
being odd, one has 
\[
\frac{\nu\left(S_{1}\cup l\right)}{2}+\underset{\nu\left(S_{1}\cup l\right)}{\left\lfloor \begin{array}{c}
1\\
\frac{1}{2}
\end{array}\right.}=\frac{\nu\left(S_{1}\right)+1}{2}+1.
\]
Thus setting $\epsilon_{1}^{S\cup l}=\epsilon_{1}^{S}+1$ yields an
admissible solution of the system for $S\cup l$ since 
\[
\epsilon_{1}^{S\cup l}=\epsilon_{1}^{S}+1\geq n_{1}^{S}+1=n_{1}^{S\cup l}.
\]
\end{proof}
\begin{proof}[Proof of Proposition \ref{ExistenceAndUnicity}.]
The proof is an induction on the length of the desingularization
of $S.$ First, let us prove the proposition for a curve $S$ desingularized
after one blowing-up. The system $\left(\mathcal{H}\right)$ reduces
to the sole equation 
\begin{equation}
{\color{red}{\color{black}\epsilon_{1}^{S}=\frac{\nu\left(S_{1}\right)}{2}+\underset{\nu\left(S_{1}\right)}{\left\lfloor \begin{array}{c}
\Delta_{1}^{S}\\
\frac{1}{2}
\end{array}\right.}}}\label{eq:1}
\end{equation}
If $\nu\left(S_{1}\right)=n_{1}^{S}=1$ or $2$ then $\Delta_{1}^{S}=1$
is the unique admissible choice since $\epsilon_{1}^{S}$ is respectively
equal to $1\textup{ and }2$, which are all bigger than the respective
$n_{1}^{S},$ whereas if $\Delta_{1}^{S}=0$ one finds always $1$
which is not bigger than $2=2-\sum_{i\in\mathfrak{N}\left(1\right)}\Delta_{i}^{S}.$
If $\nu\left(S_{_{1}}\right)=n_{1}^{S}\geq3$ then $\epsilon_{1}^{S}<n_{1}^{S}$
thus $\Delta_{1}^{S}=1$ is excluded. However, $\Delta_{1}^{S}=0$
brings an admissible solution to the equation (\ref{eq:1}) since
$\epsilon_{1}^{S}\geq2.$ 

For the inductive step, let us consider the unique choice $\left(\Delta_{k}^{S,0}\right)_{k=2,\cdots,M}$
provided by the inductive application of the proposition to each $S_{i}.$
In the same way, consider the unique choice $\left(\Delta_{k}^{S,1}\right)_{k=2,\cdots,M}$
obtained when the proposition is applied to each $S_{i}\cup D_{1}$.
From these data, we can compute the value of $\epsilon_{1}^{S,\star}$,$\ \star=0,1$
provided that we choose $\Delta_{1}^{S}=0$ or $1.$ If we choose
$\Delta_{1}^{S}=1$ then it can be seen that in this situation, $\delta_{1}^{S,1}=0,$
$\delta_{k}^{S,1}=1$ for $k\geq2$. In the same way, if we set $\Delta_{1}^{S}=0$
then $\delta_{k}^{S,0}=0$ for $k\geq2.$ Therefore, following (\ref{equation.fondamentale})
one has 
\begin{align*}
\epsilon_{1}^{S,1}+\epsilon_{1}^{S,0} & =\nu\left(S_{1}\right)+\underbrace{\underset{\nu\left(S_{1}\right)}{\left\lfloor \begin{array}{c}
1\\
\frac{1}{2}
\end{array}\right.}+\underset{\nu\left(S_{1}\right)}{\left\lfloor \begin{array}{c}
0\\
\frac{1}{2}
\end{array}\right.}}_{=1}\\
 & -\sum_{i=2}^{M}\left(\frac{\nu\left(S_{i}\right)-1}{2}+\underset{\nu\left(S_{i}\right)-1}{\left\lfloor \begin{array}{c}
\Delta_{i}^{S,1}\\
\frac{1}{2}
\end{array}\right.}\right)-\sum_{i=2}^{M}\left(\frac{\nu\left(S_{i}\right)}{2}+\underset{\nu\left(S_{i}\right)}{\left\lfloor \begin{array}{c}
\Delta_{i}^{S,0}\\
\frac{1}{2}
\end{array}\right.}\right).
\end{align*}
Observe that $\nu\left(S_{1}\right)-\sum_{i=2}^{M}\nu\left(S_{i}\right)=n_{1}^{S}.$
Thus, the relation above reduces to 
\begin{align*}
\epsilon_{1}^{S,1}+\epsilon_{1}^{S,0} & =n_{1}^{S}+1-\sum_{i=2}^{M}\frac{-1}{2}\underset{\nu\left(S_{i}\right)-1}{+\left\lfloor \begin{array}{c}
\Delta_{i}^{S,1}\\
\frac{1}{2}
\end{array}\right.}+\underset{\nu\left(S_{i}\right)}{\left\lfloor \begin{array}{c}
\Delta_{i}^{S,0}\\
\frac{1}{2}
\end{array}\right.}\\
 & =n_{1}^{S}+1-\sum_{i=2}^{M}\underset{\nu\left(S_{i}\right)}{\left\lfloor \begin{array}{c}
\Delta_{i}^{S,0}\\
\Delta_{i}^{S,1}
\end{array}\right..}
\end{align*}
Now, if $\nu\left(S_{i}\right)$ is even then $\underset{\nu\left(S_{i}\right)}{\left\lfloor \begin{array}{c}
\Delta_{i}^{S,0}\\
\Delta_{i}^{S,1}
\end{array}\right.}=\Delta_{i}^{S,0}.$ According to Lemma \ref{lemma2}, if $\Delta_{i}^{S,0}=0$ then
$\Delta_{i}^{S,1}=0.$ Moreover, if $\nu\left(S_{i}\right)$ is odd,
if $\Delta_{i}^{S,0}=1$ then $\Delta_{i}^{S,1}=1$ thus in any case,
$\underset{\nu\left(S_{i}\right)}{\left\lfloor \begin{array}{c}
\Delta_{i}^{S,0}\\
\Delta_{i}^{S,1}
\end{array}\right.}=\Delta_{i}^{S,0}.$ Finally, we are led to the relation 
\begin{equation}
\epsilon_{1}^{S,1}+\epsilon_{1}^{S,0}=n_{1}^{S}+1-\sum_{i=2}^{M}\Delta_{i}^{S,0}.\label{egaliteSuper}
\end{equation}
Hence, one of the following inequalities holds 
\[
\epsilon_{1}^{S,1}\geq n_{1}^{S}
\]
or
\[
\epsilon_{1}^{S,0}\geq2-\sum_{i=2}^{M}\Delta_{i}^{S,0}
\]
the two being mutually exclusive according to (\ref{egaliteSuper}).
By induction, this concludes the proof. 
\end{proof}
\begin{example}
Suppose that $S$ is reduced by two successive blowing-ups, then its
proximity matrix is written 
\[
\left(\begin{array}{cc}
1 & -1\\
0 & 1
\end{array}\right).
\]
Below, we present the unique choice of $\Delta^{S}=\left(\star,\star\right)\in\left\{ 0,1\right\} ^{2}$
depending on $n_{1}^{S}$ and $n_{2}^{S}$ that leads to an admissible
solution of $\left(\mathcal{H}\right)$.
\begin{figure}[H]
\begin{centering}
\includegraphics[scale=0.25]{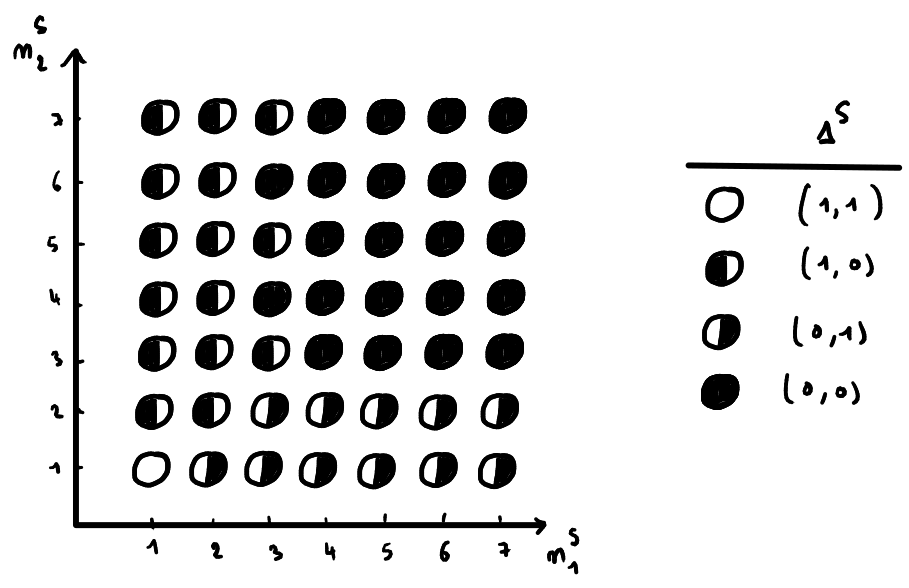}
\par\end{centering}
\caption{Unique admissible choice of $\Delta^{S}=\left(\Delta_{1}^{S},\Delta_{2}^{S}\right).$}
\end{figure}
 For instance, if $n_{1}^{S}=3$ and $n_{2}^{S}=5$ then $\nu\left(S_{1}\right)=8$
and $\nu\left(S_{2}\right)=5$. Setting $\Delta^{S}=\left(1,0\right)$
yields 
\[
\mathcal{E}^{S}=\left(\begin{array}{c}
3\\
2
\end{array}\right).
\]
One can check that 
\[
P^{-1}\mathcal{E}^{S}=\left(\begin{array}{cc}
1 & 1\\
0 & 1
\end{array}\right)\left(\begin{array}{c}
3\\
2
\end{array}\right)=\left(\begin{array}{c}
5\\
2
\end{array}\right)=\left(\begin{array}{c}
\frac{8-0}{2}+1\\
\frac{5-1}{2}
\end{array}\right)=\mathfrak{O}^{S}
\]
and 
\[
\epsilon_{1}^{S}=3\geq n_{1}^{S}=3\qquad\epsilon_{2}^{S}=2\geq2-0
\]
so the solution $\left\{ \mathcal{E}^{S},\Delta^{S}\right\} $ is
admissible. 
\end{example}

\section{Saito vector field.}

In this section, $S$ is any germ of curves - not necessarly an union
of smooth curves.

\subsection{Definition of a Saito vector field for a curve.}

Let $\textup{Der}\left(S\right)$ be the set of germ of vector fields
$X$ tangent to $S,$ i.e., such that for a reduced equation $f$
of $S,$ one has 
\[
X\cdot f\in\left(f\right).
\]

According to \cite{MR586450}, $\textup{Der}\left(S\right)$ is a
free $\mathcal{O}_{2}-$module of rank $2$ and any basis $\left\{ X_{1},X_{2}\right\} $
of $\textup{Der}\left(S\right)$ will be said a \emph{Saito basis}
for $S.$ The \emph{number of Saito} of $S$ is 
\[
\mathfrak{s}\left(S\right)=\min_{X\in\textup{Der}\left(S\right)}\nu\left(X\right)=\min\left(\nu\left(X_{1}\right),\nu\left(X_{2}\right)\right).
\]

A vector field $X\in\textup{Der}\left(S\right)$ is said to be \emph{optimal}
for $S$ if 
\[
\nu\left(X\right)=\mathfrak{s}\left(S\right).
\]

If $E$ is any process of blowing-up, we denote by $X^{E}$ the \emph{divided
pull-back} vector field of $X$ by $E$. It is a familly a vector
field parametrized by the point of the exceptionnal divisor : for
any $c\in E^{-1}\left(0\right),$ $\left(X^{E}\right)_{c}$ is written
$\frac{Y}{u^{a}}$ (or $\frac{Y}{u^{a}v^{b}}$) where $Y$ projects
onto $X$ with respect to $E$ and $u^{a}$ is the maximal power of
$u$ that divides $Y,$ where $u$ ( or $uv=0$ ) is a local equation
of $E^{-1}\left(0\right)$ at $c.$ An alternative way to construct
$X^{E}$ is the following : the vector field $X$ induces a saturated
foliation $\mathcal{F}$ at the origin of $\mathbb{C}^{2}.$ The foliation
$\mathcal{F}$ can be pulled-back by $E$ in $E^{\star}\mathcal{F}$
which defines a saturated foliation in the neighborhood of $D$. The
vector field $\left(X^{E}\right)_{c}$ is any generator of the latter
at $c.$ 

The vector field $X$ is said to be \emph{dicritical} if $X^{E_{1}}$
is generically transversal to the exceptional divisor $E_{1}^{-1}\left(0\right).$

Below, we recall some material established in \cite{genzmer2020saito}.
\begin{thm}
\label{SaitoBasisForm}Let $S$ be a curve \emph{generic} in its moduli
space. Then there exists a Saito basis $\left\{ X_{1},X_{2}\right\} $
for $S$ with one of the following forms
\begin{itemize}
\item if $\nu\left(S\right)$ is even
\begin{itemize}
\item[$\left(\mathfrak{E}\right)$] : $\nu\left(X_{1}\right)=\nu\left(X_{2}\right)=\frac{\nu\left(S\right)}{2}$
and $X_{1}$ and $X_{2}$ are non dicritical.
\item[$\left(\mathfrak{E}_{d}\right)$] : $\nu\left(X_{1}\right)=\nu\left(X_{2}\right)-1=\frac{\nu\left(S\right)}{2}-1$
and $X_{1}$ and $X_{2}$ are dicritical.
\end{itemize}
\item if $\nu\left(S\right)$ is odd
\begin{itemize}
\item[$\left(\mathfrak{O}\right)$] : $\nu\left(X_{1}\right)=\nu\left(X_{2}\right)-1=\frac{\nu\left(S\right)-1}{2}$
and $X_{1}$ and $X_{2}$ are non dicritical.
\item[$\left(\mathfrak{O}_{d}\right)$] : $\nu\left(X_{1}\right)=\nu\left(X_{2}\right)=\frac{\nu\left(S\right)-1}{2}$
and $X_{1}$ and $X_{2}$ are dicritical.
\end{itemize}
\end{itemize}
In particular, the Saito number of $S$ is equal to 
\[
\mathfrak{s}\left(S\right)=\frac{\nu\left(S\right)}{2}-\underset{\nu\left(S\right)\qquad\quad}{\left\lfloor \begin{array}{c}
1-\Delta\\
\frac{1}{2}
\end{array}\right.}
\]
where $\Delta=\left\{ \begin{array}{cl}
1 & \textup{ if \ensuremath{S} is of type \ensuremath{\left(\mathfrak{O}\right)} or \ensuremath{\left(\mathfrak{E}\right)}}\\
0 & \textup{else}
\end{array}.\right.$

If $S$ is of type $\left(\mathfrak{E}_{d}\right)$ or $\left(\mathfrak{O}_{d}\right)$
and has \emph{no free point} - see \cite{genzmer2020saito} - then
there exists a basis of the following form
\begin{itemize}
\item[$\left(\mathfrak{E}_{d}^{\prime}\right)$] : $\nu\left(X_{1}\right)=\nu\left(X_{2}\right)-2=\frac{\nu\left(S\right)}{2}-1$
and $X_{1}$ is dicritical but not $X_{2}.$ 
\item[$\left(\mathfrak{O}_{d}^{\prime}\right)$] : $\nu\left(X_{1}\right)=\nu\left(X_{2}\right)-1=\frac{\nu\left(S\right)-1}{2}$
and $X_{1}$ is dicritical but not $X_{2}.$ 
\end{itemize}
\end{thm}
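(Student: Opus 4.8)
We only sketch the strategy; complete details are in \cite{genzmer2020saito}. Throughout write $n=\nu(S)$, and for a nonzero germ of vector field $X$ let $X^{\sharp}$ denote its homogeneous jet of lowest degree, of degree $\nu(X)$. The plan is to organize everything around Saito's criterion: $\{X_{1},X_{2}\}\subset\textup{Der}(S)$ is a basis if and only if $\det(X_{1},X_{2})=u f$ for a reduced equation $f$ of $S$ and a unit $u$. With $X_{i}=a_{i}\partial_{x}+b_{i}\partial_{y}$ and $k_{i}=\nu(X_{i})$, the degree $k_{1}+k_{2}$ homogeneous component of $a_{1}b_{2}-a_{2}b_{1}$ equals the determinant of the coefficient matrix of $X_{1}^{\sharp},X_{2}^{\sharp}$; hence $k_{1}+k_{2}\leq n$, with equality unless $X_{1}^{\sharp}$ and $X_{2}^{\sharp}$ are everywhere colinear. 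A short computation in the first blowing-up chart shows that $X^{\sharp}$ is proportional to the radial field $R=x\partial_{x}+y\partial_{y}$ exactly when $X$ is dicritical; in particular, if $X_{1}$ and $X_{2}$ are both dicritical then $k_{1}+k_{2}\leq n-1$. These remarks account for the dicritical/non-dicritical split and the parity rounding in the statement, but to compute $\mathfrak{s}(S)$, to identify the valuation of the companion field, and to exclude — for $S$ generic — degenerate situations in which the leading jets are colinear without being radial, one must go deeper.

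For this I would transport the problem to the minimal desingularization $E\colon(\widetilde{\mathbb{C}^{2}},D)\to(\mathbb{C}^{2},0)$, where $E^{\star}S$ is a disjoint union of smooth curves transverse to $D$. A vector field $Y$ on $\widetilde{\mathbb{C}^{2}}$ tangent to $D$ and to $E^{\star}S$ pushes down to $E_{*}Y\in\textup{Der}(S)$ — and, up to the dicritical bookkeeping, every element of $\textup{Der}(S)$ arises this way — with $\nu(E_{*}Y)$ an explicit nondecreasing function of the vanishing orders $m_{k}$ of $Y$ along the components $D_{k}$, computed recursively along the resolution tree. The tangent cone furnishes the bottom of this recursion: reading the lowest-degree terms of $X\cdot f\in(f)$ shows $X^{\sharp}$ is tangent to the tangent cone of $S$; when the branches of a generic $S$ have pairwise distinct tangents, that cone is a product of $n$ distinct lines, whose homogeneous tangent fields of degree $<n-1$ are all of the radial form $gR$, so a low-valuation field is dicritical; in general the same local statement holds at each infinitely near point with the local multiplicity $\nu(S_{k})$ in place of $n$.

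The lower bound for $\mathfrak{s}(S)$ is then a combinatorial minimization of $\nu(E_{*}Y)$ over the admissible multiplicity vectors $(m_{k})$. The matching upper bound is where the real work is: one must actually produce a $Y$ attaining the minimum, and such a $Y$ is further required to be tangent to $E^{\star}S$ at the $n_{k}^{S}$ points of $E^{\star}S\cap D_{k}$. The obstruction to satisfying these tangency conditions with the minimal multiplicities is a class in a suitable cohomology group on $\widetilde{\mathbb{C}^{2}}$, and the hypothesis that $S$ is generic in its moduli space is used precisely to make that obstruction vanish — equivalently, the Saito number attains its generic value along the equisingularity stratum, which the dimension count identifies with the formula in the statement. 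This produces an optimal $X_{1}$; completing it to a basis with the complementary multiplicities gives $X_{2}$, whose valuation and dicriticality are then read off from $\det(X_{1},X_{2})=u f$, and genericity rules out the degenerate colinearities, so one of the four stated shapes must occur. I expect this step — proving that genericity forces the cohomological obstruction to vanish at the minimal level — to be the principal obstacle; the remainder is linear algebra with the Saito determinant, the tangent cone, and the combinatorics of infinitely near points.

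Finally, for the refined forms $(\mathfrak{E}_{d}^{\prime})$ and $(\mathfrak{O}_{d}^{\prime})$ when $S$ has no free point, the idea is that the dicritical field $X_{1}$ constructed above, whose leaves sweep out the pencil of curves resolving $S$, can be paired with a \emph{non}-dicritical companion instead of a second dicritical one. The absence of free points is exactly the combinatorial condition under which the radial leading jet of the naive completion $X_{2}$ can be cancelled by subtracting a suitable multiple of $X_{1}$ without a radial jet reappearing at the next order; this raises $\nu(X_{2})$ by one while keeping $\det(X_{1},X_{2})=u f$, and the parity bookkeeping then gives the gap $\nu(X_{2})-\nu(X_{1})=2$ in the even case and $=1$ in the odd case.
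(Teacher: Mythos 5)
First, a point of reference: the paper does not prove Theorem \ref{SaitoBasisForm} at all --- it is explicitly recalled from \cite{genzmer2020saito} --- so there is no internal proof to compare your attempt against, and your proposal has to stand on its own.

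Your preliminary observations are sound: Saito's criterion $\det(X_{1},X_{2})=uf$ gives $\nu(X_{1})+\nu(X_{2})\leq\nu(S)$ with equality exactly when the leading jets are not everywhere colinear; dicriticality of $X$ is equivalent to $X^{\sharp}=gR$ with $R$ the radial field, which forces $\nu(X_{1})+\nu(X_{2})\leq\nu(S)-1$ when both fields are dicritical; and a homogeneous field of degree $\leq n-2$ tangent to $n$ distinct lines is necessarily of the form $gR$ (since $\det(X^{\sharp},R)$ has degree $\nu(X)+1$ and must be divisible by the tangent cone). These facts correctly explain why the four listed shapes are the only \emph{possible} balanced configurations. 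But the actual content of the theorem is that for $S$ generic in its moduli space the basis \emph{is} balanced, i.e.\ $\mathfrak{s}(S)=\frac{\nu(S)}{2}-\lfloor\cdots$ attains the maximal value compatible with Saito's criterion, and this is precisely the step you defer: you reduce it to the vanishing of a cohomological obstruction on the resolution and then write that you ``expect this step to be the principal obstacle.'' That is the theorem, not a lemma on the way to it. Nothing in your sketch identifies the relevant sheaf, computes the dimension of the obstruction space, or explains why genericity along the equisingularity stratum (as opposed to some other notion of genericity) kills it; for non-generic curves the Saito basis can be arbitrarily unbalanced (e.g.\ quasi-homogeneous curves admit a tangent field of valuation $1$), so some genuinely quantitative input is unavoidable here.

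The final paragraph on $(\mathfrak{E}_{d}^{\prime})$ and $(\mathfrak{O}_{d}^{\prime})$ has the same character: the assertion that ``the absence of free points is exactly the combinatorial condition under which the radial leading jet of the naive completion can be cancelled'' is the statement to be proved, restated, and you never engage with what a free point is. Also, your reduction of the general case to the case of pairwise distinct tangents (``the same local statement holds at each infinitely near point with $\nu(S_{k})$ in place of $n$'') is not automatic: the local curves $S_{k}$ at infinitely near points come decorated with components of the exceptional divisor, and the interaction between the divided pull-back and the valuation drop is exactly the bookkeeping that the notion of relative strict transform in this paper is designed to handle. So the proposal is a reasonable reading guide to \cite{genzmer2020saito}, but as a proof it is missing its central argument.
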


A basis given by the above result is said to \emph{be adapted. }An
adapted basis behaves well with respect to the blowing-up : indeed,
in any case, if $\left\{ X_{1},X_{2}\right\} $ is an adapted basis
for $S$ then for any $c\in D_{1},$ the family 
\[
\left\{ X_{1}^{E_{1}},X_{2}^{E_{1}}\right\} 
\]
is a Saito basis for $\left(E_{1}^{\star}S\right)_{c}$ or $\left(E_{1}^{\star}S\cup D_{1}\right)_{c}$
depending on the type of the basis. Notice that this property does
not hold for any Saito basis and that the basis above may not be adapted. 

For the sake of simplicity, we will say that $S$ is of class $\mathfrak{1}$
if $S$ is of type $\left(\mathfrak{E}\right)$ or $\left(\mathfrak{O}_{d}\right).$
Otherwise, we will say that $S$ is of class $\mathfrak{2}.$ The
main difference between the two classes is that the vector fields
of an adapted basis for a curve of type $\mathfrak{1}$ share the
same valuations, whereas they are different for a curve of class $\mathfrak{2}.$
In particular, for a curve of class $\mathfrak{1}$, any Saito basis
is actually adapted. 

To keep track of the type ($\mathfrak{1}$ or $\mathfrak{2}$) of
the successive blowing-ups of the curve $S,$ we introduce the notion
of \emph{relative strict transform of $S.$}
\begin{defn}
The \emph{relative strict transform} of $S$ by $E$, denoted by $S^{E}$,
is the following union of curves 
\[
S^{E}=E^{\star}S\cup\bigcup_{i\in J\subset\left\{ 1,\cdots,N\right\} }D_{i}
\]
 where $J$ is inductively defined as follows : 
\begin{center}
$i\in J$ $\Longleftrightarrow$ $S_{i}\cup\text{\ensuremath{\bigcup}}_{j\in\mathfrak{P}\left(i\right)\cap J}D_{j}$
is of type $\left(\mathfrak{E}\right)$ or $\left(\mathfrak{O}\right).$
\par\end{center}

\end{defn}

Finally, we are able to introduce the main object of interest here. 
\begin{defn}
A germ of vector field is said to be \emph{Saito} for $S$ if for
any intermediate process of blowing-ups $E^{1,k}$ of $E$ the vector
field $X^{E^{1,k}}$ is optimal for $S^{E^{1,k}}$.

In other words, a vector field is said Saito for $S$ if it is optimal
for $S$ and if this property propagates all along the process of
desingularization of $S.$ 

The definition be given, there is apparently no reason for such a
vector field to exist in general. However, we will see that this is
actually the case in particular, for unions of germs of smooth curves. 
\end{defn}

\begin{example}
\label{exa:Let-us-consider}Let us consider the curve $S$ defined
by 
\[
S=\left\{ x\left(x+y^{2}\right)y\left(y+x^{2}\right)=0\right\} .
\]

It can be checked that $S$ is of type $\left(\mathfrak{E}\right)$
and that 
\begin{align*}
X_{1} & =\left(xy^{2}+x^{2}\right)\partial_{x}+\left(2y^{3}+2xy\right)\partial_{y}\\
X_{2} & =\left(2x^{3}+2xy\right)\partial_{x}+\left(x^{2}y+y^{2}\right)\partial_{y}
\end{align*}
is an adapted basis. In particular, the Saito number of $S$ is 
\[
\mathfrak{s}\left(S\right)=2.
\]
Moreover, after one blowing-up, $X_{1}+X_{2}$ is given in the chart
containing the singular point $\left(0,0\right)$ of $S_{2}$ by 
\[
\left(X_{1}+X_{2}\right)^{E_{1}}=x\left(xy^{2}+2x+2y+1\right)\partial_{x}+y\left(xy^{2}-x-y+1\right)\partial_{y}
\]
which is of multiplicity of $1$ and tangent to the radial vector
field at order $1$. In the other chart, the same occurs for the singular
point of $S_{3}$. Therefore, $X_{1}+X_{2}$ is Saito for $S.$ Notice
that, $\left(X_{1}+X_{2}\right)^{E_{1}}$ admits an other singular
point whose coordinates are $\left(0,1\right)$ in the coordinates
of the chart above. At $\left(0,1\right)$, its linear part is not
trivial and has two non vanishing eigenvalues whose quotient is not
a non negative rational number. In particular, according to \cite{diff},
$X_{1}+X_{2}$ admits a smooth invariant curve that is neither contained
in $S$ nor tangent to a component of $S.$ 

\begin{figure}
\begin{centering}
\includegraphics[scale=0.3]{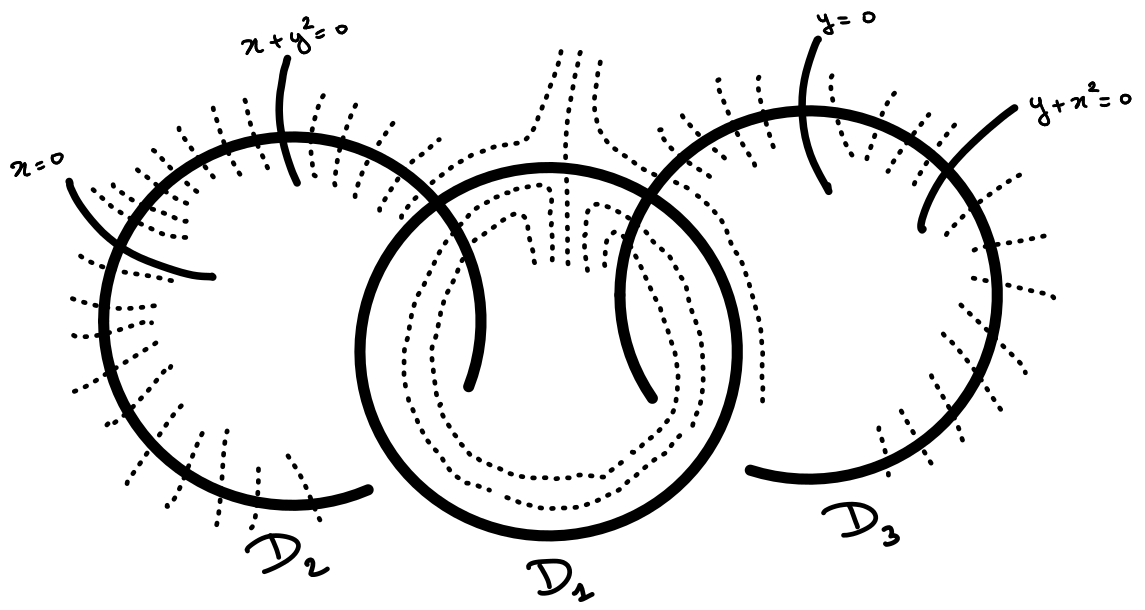}
\par\end{centering}
\caption{\label{fig:Topology-of-the}Topology of the leaves of the vector field
$\left(X_{1}+X_{2}\right)^{E}$}

\end{figure}
\end{example}

\subsection{Numerical property of a Saito vector field.}

Let us investigate the relation between the topological data associated
to a Saito vector field and the combinatorial property of the proxmity
matrix established before. 

First, let us recall some results from \cite{hertlingfor}.

Let $\mathfrak{M}$ be the sheaf generated by the global functions
$h\circ E$ with $h\in\mathcal{O}_{2}$ and $h\left(0\right)=0$.
It is a simple matter to get the following decomposition of sheaves
\[
\mathfrak{M}=\mathcal{O}\left(-\sum_{i=1}^{N}\rho_{i}^{E}D_{i}\right)
\]
where the integers $\rho_{i}^{E}$ are known as \emph{the multiplicities
of $D$}. The number $\rho_{i}^{E}$ is also the multiplicity of a
curve whose strict transform by $E$ is smooth and attached to a regular
point of $D_{i}$.

The integer $\textup{val}_{X}\left(D_{i}\right)$ refers to \emph{the
non-dicritical valence} of $D_{i}$, which is the number of non $X^{E}-$invariant
components of $D$ that are in the neighborhood of $D_{i}$. 

The following definitions are proposed in \cite{hertlingfor}.
\begin{defn}
Let $X$ be a germ of vector field given by 
\[
\omega=a\left(x,y\right)\partial_{x}+b\left(x,y\right)\partial_{y}
\]
\begin{enumerate}
\item Let $\left(S,p\right)$ be a germ of smooth invariant curve. If, in
some coordinates, $S$ is the curve $\left\{ x=0\right\} $ and $p$
the point $(0,0)$, then the integer $\nu\left(b\left(0,y\right)\right)$
is called \emph{the indice} of $X$ at $p$ with respect to $S$ and
is denoted by 
\[
\textup{Ind}\left(X,S,p\right).
\]
\item Let $\left(S,p\right)$ be a germ of smooth non-invariant curve. If,
in some coordinates, $S$ is the curve $\left\{ x=0\right\} $ and
$p$ the point $(0,0)$, then the integer $\nu\left(a\left(0,y\right)\right)$
is called \emph{the tangency order} of $X$ with respect to $S$ and
is denoted 
\[
\textup{Tan}\left(X,S,p\right).
\]
 
\end{enumerate}
\end{defn}

The equality below is proved in \cite{hertlingfor} and specializes
to a result of \cite{camacho} if $X^{E}$ leaves invariant $D.$ 
\begin{prop}
\label{prop.Hertling}The multiplicity of $X$ satisfies the equality
\[
\nu\left(X\right)+1=\sum_{i=1}^{N}\rho_{i}^{E}\epsilon_{i}\left(X,E\right)
\]
where 
\begin{enumerate}
\item if $D_{i}$ is non invariant by $X^{E}$, $\epsilon_{i}\left(X,E\right)=-\textup{val}_{X}\left(D_{i}\right)+\sum_{c\in D_{i}}\textup{Ind}\left(X^{E},D_{i},c\right)$. 
\item if $D_{i}$ is invariant by $X^{E}$, $\epsilon_{i}\left(X,E\right)=2-\textup{val}_{X}\left(D_{i}\right)+\sum_{c\in D_{i}}\textup{Tan}\left(X^{E},D_{i},c\right)$. 
\end{enumerate}
\end{prop}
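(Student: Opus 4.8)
The plan is to read this as an \emph{index theorem} and to prove it by induction on the length $N$ of the process $E$. The guiding principle is that $\nu(X)$ depends only on the germ of the foliation $\mathcal{F}$ attached to $X$ and not on $E$; hence it is enough to check that the right-hand side is unchanged when $E$ is refined by one more elementary blowing-up, the case $N=1$ serving as the base. Throughout I would work with the saturated pulled-back foliation $E^{\star}\mathcal{F}$, a generator of which at each point is by definition a representative of $X^{E}$. Each component $D_{i}\cong\mathbb{P}^{1}$ carries a global divisorial identity of Euler-characteristic type: if $D_{i}$ is invariant, the sum over $c\in D_{i}$ of the Camacho--Sad type local indices is the self-intersection $D_{i}\cdot D_{i}$; if $D_{i}$ is not invariant, the degree of the tangency divisor of $E^{\star}\mathcal{F}$ with $D_{i}$ is governed by the normal bundle of $\mathcal{F}$ restricted to $D_{i}$. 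In both cases the local contributions are exactly the numbers $\textup{Ind}$ and $\textup{Tan}$ of the statement, the additive constant $2$ recording $\chi(\mathbb{P}^{1})$ and the term $\textup{val}_{X}(D_{i})$ recording the punctures of $D_{i}$ coming from its non-invariant neighbours.

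For the base case $N=1$ one blows up the origin once and writes $X=X_{\nu}+\cdots$, with $X_{\nu}$ the non-zero homogeneous part collecting the terms of lowest degree $\nu=\nu(X)$. A direct computation in the two standard charts expresses $X^{E_{1}}$, its singular locus on $D_{1}$, and all the local indices above in terms of the leading homogeneous data $X_{\nu}$ alone; distinguishing the non-dicritical case ($D_{1}$ invariant) from the dicritical one ($D_{1}$ not invariant), one obtains in each case $\rho_{1}^{E_{1}}\epsilon_{1}(X,E_{1})=\epsilon_{1}(X,E_{1})=\nu(X)+1$, which is the statement for $N=1$.

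For the inductive step, write $E=E'\circ\sigma$, where $\sigma$ is the last elementary blowing-up, centered at a point $p$ lying on a single component $D_{j}$ or on a corner $D_{j}\cap D_{k}$ of the exceptional divisor of $E'$. Two effects must be accounted for. First, the new component $D_{N}$: whether $D_{N}$ is $X^{E}$-invariant is decided by the localisation $\left(X^{E'}\right)_{p}$ being non-dicritical or dicritical, and the contribution $\rho_{N}^{E}\epsilon_{N}(X,E)$ is computed by applying the base case to $\left(X^{E'}\right)_{p}$ together with the multiplicity recursion $\rho_{N}^{E}=\sum_{i\in\mathfrak{P}(N)}\rho_{i}^{E}$. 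Second, the parent components $D_{i}$, $i\in\mathfrak{P}(N)$: their valence $\textup{val}_{X}(D_{i})$ changes by the number of non-invariant components newly created through $p$, while the local index of $E^{\star}\mathcal{F}$ along $D_{i}$ at the point over $p$ differs from the corresponding one for $E'$ by the standard transformation rule for the Camacho--Sad index (respectively for the tangency order) under a single blowing-up. Summing these local modifications, weighted by the $\rho_{i}^{E}$ and using the recursion, one finds $\sum_{i}\rho_{i}^{E}\epsilon_{i}(X,E)=\sum_{i}\rho_{i}^{E'}\epsilon_{i}(X,E')$, and the induction hypothesis closes the argument.

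I expect the real difficulty to be concentrated in the inductive step, and more precisely in the dicritical bookkeeping: one must verify that when $p$ is a dicritical singularity of $\left(X^{E'}\right)_{p}$ the ``$2$'' produced by a newly invariant component is exactly cancelled by the drop of the tangency contributions of the non-invariant components through $p$ and by the change of the parents' valences, and symmetrically in the converse situation. Getting every sign and every off-by-one constant right across all local configurations (regular point versus corner, invariant versus non-invariant parent, free versus satellite center) is where the genuine work lies.
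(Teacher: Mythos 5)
First, a point of comparison: the paper does not actually prove this proposition --- it is quoted from \cite{hertlingfor} (and, in the non-dicritical case, attributed to \cite{camacho}), so there is no internal proof to measure your attempt against. Your strategy --- induction on the length of $E$, a base case computed from the leading homogeneous jet $X_{\nu}$ in the two charts of a single blowing-up, and an inductive step tracking how the local indices and the valences transform under one further blowing-up, weighted by the multiplicity recursion $\rho_{N}^{E}=\sum_{i\in\mathfrak{P}(N)}\rho_{i}^{E}$ --- is indeed the standard route by which such multiplicity formulas are established in the literature, and I believe it can be pushed through.

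That said, as written the proposal has genuine gaps rather than being a complete argument. First, all of the actual content is deferred: the base-case chart computation, the transformation rules for $\textup{Ind}$ and $\textup{Tan}$ under a single blowing-up, and the cancellation bookkeeping in the dicritical case are announced but not performed, and you say yourself that this is ``where the genuine work lies.'' Second, the global identity you invoke on an invariant component is misattributed: $\textup{Ind}\left(X^{E},D_{i},c\right)$ is the vanishing order at $c$ of the restriction of $X^{E}$ to $D_{i}$, a non-negative integer, so the relevant global statement on $D_{i}\cong\mathbb{P}^{1}$ is a Poincar\'e--Hopf/degree count for that restriction viewed as a section of $TD_{i}$ twisted by the local multiplicities at the corners; it is \emph{not} the Camacho--Sad theorem, whose indices are residues (rational numbers in general) summing to the self-intersection $D_{i}\cdot D_{i}$. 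Conflating the two will produce wrong constants precisely in the bookkeeping you identify as delicate. Third, a careful proof must confront the fact that the statement as printed has the two cases interchanged relative to the definitions ($\textup{Ind}$ is defined only for invariant curves, $\textup{Tan}$ only for non-invariant ones) and relative to the way the formula is used in the proof of Theorem \ref{Saito.est.admiss}; the version one should actually prove is $\epsilon_{i}=-\textup{val}_{X}\left(D_{i}\right)+\sum_{c}\textup{Ind}$ when $D_{i}$ is invariant and $\epsilon_{i}=2-\textup{val}_{X}\left(D_{i}\right)+\sum_{c}\textup{Tan}$ when it is not (check this against $X=x\partial_{x}+y\partial_{y}$, for which $D_{1}$ is non-invariant and $2-0+0=\nu(X)+1$). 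Your outline does not notice this, which suggests the local computations were not actually attempted.
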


\begin{thm}
\label{Saito.est.admiss}If $X$ is a Saito vector field then setting
\[
\text{\ensuremath{\mathcal{E}^{S}=\left(\epsilon_{i}\left(X,E\right)\right)_{i=1,\ldots,N}} }
\]
and $\Delta^{S}=\left(\Delta_{i}^{S}\right)_{i=1,\ldots,N}$ such
that 
\[
\Delta_{i}^{S}=\left\{ \begin{array}{cl}
1 & \textup{ if \ensuremath{D_{i}} is invariant by \ensuremath{X^{E}}}\\
0 & \textup{else}
\end{array}\right.
\]
 yields an admissible solution $\left\{ \mathcal{E}^{S},\Delta^{S}\right\} $
of $\left(\mathcal{H}\right).$
\end{thm}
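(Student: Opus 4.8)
The plan is to establish the theorem by induction on the length $N$ of the desingularization of $S$, running the induction in parallel with the structure already used in the proof of Proposition \ref{ExistenceAndUnicity}. First I would treat the base case $N=1$: here a Saito vector field $X$ is simply an optimal vector field for $S$, and by Theorem \ref{SaitoBasisForm} its multiplicity is $\mathfrak{s}(S)$. One then computes $\epsilon_1(X,E)$ directly from Proposition \ref{prop.Hertling} — which after one blowing-up reduces to $\nu(X)+1=\rho_1^E\epsilon_1(X,E)$ with $\rho_1^E=1$ — so that $\epsilon_1(X,E)=\mathfrak{s}(S)+1$. Comparing with equation (\ref{eq:1}) and the closed form for $\mathfrak{s}(S)$ in Theorem \ref{SaitoBasisForm}, one checks that the value of $\Delta_1^S$ dictated by whether $D_1$ is invariant (i.e. by the class $\mathfrak{1}$ vs $\mathfrak{2}$, equivalently the type $(\mathfrak{E})/(\mathfrak{O})$ vs $(\mathfrak{E}_d)/(\mathfrak{O}_d)$) matches the $\Delta$ appearing in that formula, and that the compatibility condition $(\star)$ is exactly the statement $\epsilon_1(X,E)\geq n_1^S$ (invariant case) or $\epsilon_1(X,E)\geq 2-\sum_{i\in\mathfrak{N}(1)}\Delta_i^S$ (non-invariant case), both of which follow from optimality together with the dicritical/non-dicritical dichotomy.

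For the inductive step I would use the defining property of an adapted basis recalled after Theorem \ref{SaitoBasisForm}: if $X$ is Saito for $S$, then for each tangency point $t_k$, the divided pull-back $X^{E_1}$ restricted near $t_k$ is Saito for the germ $S_k$ or $S_k\cup D_1$ — the choice being governed precisely by whether $1\in\mathfrak{P}(k)\cap J$, i.e. by the relative strict transform $S^E$. This is the mechanism that lets the inductive hypothesis apply to each $S_k$ (with the divisor $D_1$ adjoined or not), producing admissible solutions $\{\mathcal{E}^{S_k},\Delta^{S_k}\}$ of the corresponding systems $(\mathcal{H})$. The components $\epsilon_i(X,E)$ and $\Delta_i^S$ for $i\geq 2$ are then identified with the entries of these lower-dimensional solutions; the block-triangular shape of $P$ (the first row of $P$ has $-1$ in positions $2,\dots,M$ and the lower-right block is the proximity matrix of the union of the $S_k$) means that verifying $P^{-1}\mathcal{E}^S=\mathfrak{S}^S$ for $S$ reduces, on rows $2,\dots,N$, to the already-known relations for the $S_k$, and on row $1$ to the single scalar identity — this is exactly equation (\ref{equation.fondamentale}), into which one substitutes the values of $\delta_k^S$ determined by $\Delta_1^S$ and the relative-strict-transform bookkeeping, together with Proposition \ref{prop.Hertling} applied to $D_1$.

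The one genuinely new computation is that row-$1$ identity: one must show that the value of $\epsilon_1(X,E)$ given by Proposition \ref{prop.Hertling}(1) or (2) — namely $2-\mathrm{val}_X(D_1)+\sum_c\mathrm{Tan}$ or $-\mathrm{val}_X(D_1)+\sum_c\mathrm{Ind}$ along $D_1$ — agrees with the arithmetic expression for $\epsilon_1^S$ coming out of (\ref{equation.fondamentale}). Here $\mathrm{val}_X(D_1)$ records which of the $D_k$, $k\in\mathfrak{N}(1)$, are non-invariant, which is $\sum_k(1-\Delta_k^S)$; and the indices/tangency orders at the points $t_k$ are, by the adapted-basis property, the Saito-type invariants of the $S_k$, i.e. the $\epsilon_k$ or are controlled by $\nu(S_k)$ and the parity corrections encoded in the $\left\lfloor{}\right.$ notation. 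I expect the main obstacle to be precisely the bookkeeping in this scalar identity: tracking, uniformly across the four (or six) basis types of Theorem \ref{SaitoBasisForm}, how the parity of $\nu(S_1)$ interacts with $\Delta_1^S$, with the half-integer shifts, and with whether each $D_k$ in the neighborhood of $D_1$ is invariant — in other words, reproducing the sign accounting of equation (\ref{egaliteSuper}) but now reading the $\Delta_i$ off the vector field rather than choosing them freely. Once that identity is in place, the compatibility conditions $(\star)$ for the index $1$ follow from the same case analysis (optimality forces the correct inequality, with the dicritical case giving the strict relation to $n_1^S$), and the conditions for $i\geq 2$ are inherited verbatim from the inductive hypothesis, completing the induction.
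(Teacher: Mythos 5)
Your plan rests on the same two pillars as the paper's proof --- Proposition \ref{prop.Hertling} and the fact that, by definition, a Saito vector field is optimal for the relative strict transform at every intermediate stage --- but the paper does not run an induction. For each $k$ it factors $E=E'\circ E^{k}$, where $E'$ leads to the attachment point $p$ of $S_{k}$, applies Proposition \ref{prop.Hertling} to the optimal vector field $X^{E'}$ at $p$, and notes that the resulting identity $\sum_{i\geq k}\rho_{i}^{E^{k}}\epsilon_{i}(X^{E'},E^{k})=\mathfrak{s}\bigl((S^{E'})_{p}\bigr)+1$ becomes, after the two corrections $\epsilon_{k}(X^{E'},E^{k})=\epsilon_{k}(X,E)+\delta_{k}^{S}$ and $\nu\bigl((S^{E'})_{p}\bigr)=\nu(S_{k})+\delta_{k}^{S}$, precisely the $k$-th row of $P^{-1}\mathcal{E}^{S}=\mathfrak{S}^{S}$, because the matrix $(\rho_{i}^{E^{k}})$ is the inverse of the proximity matrix. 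All $N$ rows are thus obtained at once, and the step you single out as the main obstacle --- matching the local expression for $\epsilon_{1}(X,E)$ along $D_{1}$ (valence, indices, tangency orders) against the arithmetic of (\ref{equation.fondamentale}) across the types of Theorem \ref{SaitoBasisForm} --- dissolves: row $1$ of the system is $\sum_{i}\rho_{i}^{E}\epsilon_{i}(X,E)$, which Proposition \ref{prop.Hertling} identifies with $\nu(X)+1=\mathfrak{s}(S)+1$, and the Saito-number formula of Theorem \ref{SaitoBasisForm} turns this into $\mathfrak{S}_{1}^{S}$ with no case analysis over the basis types. Your route would eventually reproduce this, but at the price of exactly the parity bookkeeping you anticipate.

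Two points in your inductive step need more than a gesture, and they are where the content of the theorem actually sits. First, the identification of the admissible sub-solutions for $S_{k}$ or $S_{k}\cup D_{1}$ with the entries $\epsilon_{i}(X,E)$ is exact only for indices strictly inside each sub-tree; at the root $k$ the adjoined divisor $D_{1}$ contributes $\delta_{k}^{S}$ both to $\epsilon_{k}$ and to the multiplicity of the germ at $t_{k}$, and one must check that the two corrections cancel in the $k$-th equation. Second, the compatibility conditions are not inherited verbatim: in the sub-curve $S_{k}\cup D_{1}$ the divisor $D_{1}$ counts as a branch, so the numbers $n_{i}$ of the sub-system differ from the $n_{i}^{S}$ wherever the strict transform of $D_{1}$ is attached. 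The paper sidesteps both issues by verifying $(\star)$ directly from the two formulas for $\epsilon_{k}(X,E)$ in Proposition \ref{prop.Hertling}: when $D_{k}$ is non-invariant, $\epsilon_{k}(X,E)-2+\sum_{i\in\mathfrak{N}(k)}\Delta_{i}^{S}$ is a sum of tangency orders and hence nonnegative; when $D_{k}$ is invariant, the term $-\mathrm{val}_{X}(D_{k})$ is absorbed by the indices at the corners $D_{k}\cap D_{i}$ with $\Delta_{i}^{S}=1$, and each of the $n_{k}^{S}$ branches of $E^{\star}S$ attached to $D_{k}$ contributes an index at least $1$, giving $\epsilon_{k}(X,E)\geq n_{k}^{S}$. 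With these two transfers made explicit your induction would go through; as written, the hard steps are deferred rather than resolved.
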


\begin{proof}
For $k=1,\ldots,N,$ let $E^{\prime}$ be the intermediate process
of blowing-ups that leads to $S_{k}$ and $E^{k}$ such that 
\[
E=E^{\prime}\circ E^{k}.
\]
Let us denote by $p$ the point of attachement of $S_{k}$ to the
exceptional divisor of $E^{\prime}.$ The vector field $X^{E^{\prime}}$
being optimal for $\left(S^{E^{\prime}}\right)_{p}$, we have 
\begin{align}
\nu_{p}\left(X^{E^{\prime}}\right)+1 & =\sum_{i=k}^{N}\rho_{i}^{E^{k}}\epsilon_{i}\left(X^{E^{\prime}},E^{k}\right)=\mathfrak{s}\left(\left(S^{E^{\prime}}\right)_{p}\right)+1\label{equation.boubou}\\
 & =\frac{\nu\left(\left(S^{E^{\prime}}\right)_{p}\right)}{2}+\underset{\nu\left(\left(S^{E^{\prime}}\right)_{p}\right)\quad}{\left\lfloor \begin{array}{c}
\Delta_{k}\\
\frac{1}{2}
\end{array}\right.}.\nonumber 
\end{align}
Now, it can be seen that for $i\neq k$
\[
\epsilon_{i}\left(X^{E^{\prime}},E^{k}\right)=\epsilon_{i}\left(X,E\right)
\]
and that 
\[
\epsilon_{k}\left(X^{E^{\prime}},E^{k}\right)=\epsilon_{k}\left(X,E\right)+\delta_{k}^{S}.
\]
Since $\rho_{k}^{E^{k}}=1$ and $\nu\left(\left(S^{E^{\prime}}\right)_{p}\right)=\nu\left(S_{k}\right)+\delta_{k}^{S}$,
the relation (\ref{equation.boubou}) is written
\[
\sum_{i=k}^{N}\rho_{i}^{E^{k}}\epsilon_{i}\left(X,E\right)=\frac{\nu\left(S_{k}\right)-\delta_{k}^{S}}{2}+\underset{\nu\left(S_{k}\right)-\delta_{k}^{S}\quad}{\left\lfloor \begin{array}{c}
\Delta_{k}\\
\frac{1}{2}
\end{array}\right.}.
\]
Now following \cite{MR2107253}, the matrix defined by 
\[
\left(\rho_{i}^{E^{k}}\right)_{N\geq i\geq k\geq1}
\]
is an upper triangular invertible matrix and its inverse is the proximity
matrix $P$. Thus, the vectors $\mathcal{E}^{S}$ and $\Delta^{S}$
as defined in the statement provide a solution to the system $\left(\mathcal{H}\right).$
Moreover, if $\Delta_{k}=0$, then
\begin{align*}
\epsilon_{k}\left(X,E\right)-2+\sum_{i\in\mathfrak{N}\left(k\right)}\Delta_{i}^{S} & =\epsilon_{k}\left(X,E\right)-2+\textup{val}_{X}\left(D_{k}\right)\\
 & =\sum_{c\in D_{k}}\textup{Tan}\left(X^{E},D_{k},c\right)\geq0
\end{align*}
and if $\Delta_{k}=1$ then 
\begin{align*}
\epsilon_{k}\left(X,E\right) & =-\textup{val}_{X}\left(D_{k}\right)+\sum_{c\in D_{k}}\textup{Ind}\left(X^{E},D_{k},c\right)\\
 & =-\sum_{i\in\mathfrak{N}\left(k\right)}\Delta_{i}^{S}+\sum_{c=D_{k}\cap D_{i},\ i\in\mathfrak{N}\left(k\right)}\textup{Ind}\left(X^{E},D_{k},c\right)\\
 & \quad+\sum_{c\neq D_{k}\cap D_{i},\ i\in\mathfrak{N}\left(k\right)}\textup{Ind}\left(X^{E},D_{k},c\right)
\end{align*}
It can be seen that if $\Delta_{i}^{S}=1$ then $\textup{Ind}\left(X^{E},D_{k},D_{k}\cap D_{i}\right)\geq1.$
Moreover, for any regular component of $S^{E}$ attached to $D_{k}$
at $c$, one has
\[
\textup{Ind}\left(X^{E},D_{k},c\right)\geq1.
\]
Thus, 
\[
\sum_{c\neq D_{k}\cap D_{i},\ i\in\mathfrak{N}\left(k\right)}\textup{Ind}\left(X^{E},D_{k},c\right)\geq n_{k}^{S}.
\]
Finally, we are led to 
\[
\epsilon_{k}\left(X,E\right)\geq n_{k}^{S}.
\]
Therefore, the solution $\left\{ \mathcal{E}^{S},\Delta^{S}\right\} $
is admissible. 
\end{proof}
\begin{example}
The proximity matrix of Example (\ref{exa:Let-us-consider}) is 
\[
P=\left(\begin{array}{ccc}
1 & -1 & -1\\
0 & 1 & 0\\
0 & 0 & 1
\end{array}\right)
\]
and one has 
\[
\nu\left(S_{1}\right)=4,\ \nu\left(S_{2}\right)=2,\nu\left(S_{3}\right)=2,\ n_{1}^{S}=0,\ n_{2}^{S}=2,\ n_{3}^{S}=2.
\]
Picture (\ref{fig:Topology-of-the}) ensures that 
\[
\epsilon_{1}^{S}=1,~\epsilon_{2}^{S}=\epsilon_{3}^{S}=1\quad\Delta_{1}^{S}=1,\ \Delta_{2}^{S}=\Delta_{3}^{S}=0.
\]
Thus, we obtain that $\delta_{1}^{S}=0,\ \delta_{2}^{S}=\delta_{3}^{S}=1.$
Finally, one can check that 
\[
P^{-1}\mathcal{E}^{S}=\left(\begin{array}{ccc}
1 & 1 & 1\\
0 & 1 & 0\\
0 & 0 & 1
\end{array}\right)\left(\begin{array}{c}
1\\
1\\
1
\end{array}\right)=\left(\begin{array}{c}
3\\
1\\
1
\end{array}\right)=\left(\begin{array}{c}
\frac{4-0}{2}+1\\
\frac{2-1}{2}+\frac{1}{2}\\
\frac{2-1}{2}+\frac{1}{2}
\end{array}\right)=\mathfrak{S}^{S}
\]
\end{example}

\subsection{Existence of a Saito vector field. }

Below, we establish the existence of a Saito vector field for an union
of germs of smooth curves. 
\begin{thm}
\label{thm.12}Let $S$ be a generic curve whose components are all
smooth. Then
\begin{enumerate}
\item there exists a vector field $X$ Saito for $S.$ 
\item there exists $l$ a germ of smooth curve such that $S\cup l$ has
no Saito basis of type $\left(\mathfrak{E}_{d}^{\prime}\right).$
\end{enumerate}
\end{thm}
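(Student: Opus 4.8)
The plan is to prove the two assertions of Theorem~\ref{thm.12} by a single induction on the length $N$ of the desingularization of $S$, building the Saito vector field simultaneously with the verification that no type $\left(\mathfrak{E}_{d}^{\prime}\right)$ basis survives after adding a generic line. The natural strategy is to exploit the combinatorial skeleton already in place: Proposition~\ref{ExistenceAndUnicity} furnishes the unique admissible pair $\left\{\mathcal{E}^{S},\Delta^{S}\right\}$, Theorem~\ref{Saito.est.admiss} shows that any Saito vector field realizes such a pair, and so the whole problem reduces to realizing the prescribed data $\left(\epsilon_{i},\Delta_{i}\right)$ geometrically by an honest vector field. Concretely, I would run the induction on the curves $S_{k}$ attached to $D_{1}$ at the tangency points $t_{2},\dots,t_{M}$: by the inductive hypothesis each $S_{k}$ (or $S_{k}\cup D_{1}$, according to whether $k\in J$ in the relative strict transform) carries a Saito vector field $Y_{k}$, adapted in the sense of Theorem~\ref{SaitoBasisForm}, and the task is to glue these local data across $D_{1}$ into a global vector field $X$ on $\left(\mathbb{C}^{2},0\right)$ whose divided pull-back by $E_{1}$ restricts to (a constant multiple of, or a summand of an adapted basis at) $Y_{k}$ near each $t_{k}$.

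\textbf{Construction of $X$.} The gluing is where the classification of Theorem~\ref{SaitoBasisForm} does the real work. One distinguishes the parity of $\nu\left(S_{1}\right)=\nu\left(S\right)$ and whether $\Delta_{1}^{S}=0$ or $1$ — equivalently, using the notation of the paper, whether $S$ should be of class $\mathfrak{1}$ or $\mathfrak{2}$, and dicritical or not. In the non-dicritical cases one wants $D_{1}$ to be invariant, so near each $t_{k}$ one picks the member of an adapted basis of $S_{k}\cup D_{1}$ whose pull-back is again Saito, and the prescribed value $\epsilon_{1}\left(X,E\right)$ computed from $\left(\mathcal{H}\right)$ is exactly the tangency/index count forced by Proposition~\ref{prop.Hertling}; the existence of a global vector field with these separatrix data along $D_{1}$ and with the right contact order to the radial field is then a standard interpolation argument (choose the $1$-jet of $X^{E_1}$ along $D_1$ freely subject to the divisor constraints and push down). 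In the dicritical cases ($\Delta_{1}^{S}=0$ with $\nu\left(S\right)$ odd of type $\left(\mathfrak{O}_{d}\right)$, or even of type $\left(\mathfrak{E}_{d}\right)$) one instead arranges $X^{E_1}$ to be generically transverse to $D_1$; the floor gadget $\left\lfloor\begin{array}{c}\Delta_{1}\\ \frac12\end{array}\right.$ records precisely the $\tfrac12$-shift between the even and odd dicritical normalizations, which is what makes $\epsilon_{1}$ an integer in all four cases. Throughout, genericity of $S$ is invoked exactly as in the proof of Proposition~\ref{ExistenceAndUnicity} and in Example~\ref{exa:Let-us-consider}: it guarantees that the singular points of $X^{E_1}$ lying off the $t_k$ are non-degenerate with non-resonant linear part, hence contribute the minimal possible indices, so that $\nu\left(X\right)=\mathfrak{s}\left(S\right)$ is actually attained rather than merely bounded.

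\textbf{The second assertion.} For part (2), let $l$ be a generic smooth curve and consider $S\cup l$. By Lemma~\ref{lemma2} the admissible data for $S\cup l$ are obtained from those of $S$ by the explicit transformation there: $n_{1}$ increases by one, $\nu\left(S_{1}\right)$ by one, and the parity therefore flips. Now a basis of type $\left(\mathfrak{E}_{d}^{\prime}\right)$ exists only for curves of even multiplicity of type $\left(\mathfrak{E}_{d}\right)$ that moreover have no free point. Adding the generic line $l$ changes $\nu$ to an odd number, so $S\cup l$ cannot be of type $\left(\mathfrak{E}_{d}\right)$ at all; the worst that can happen in the odd world is type $\left(\mathfrak{O}_{d}\right)$ or $\left(\mathfrak{O}_{d}'\right)$, neither of which is $\left(\mathfrak{E}_{d}'\right)$. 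Hence part (2) is essentially a parity observation once part (1) and Lemma~\ref{lemma2} are in hand; the only thing to check is that $l$ being generic places its attachment point on $D_1$ away from all $t_k$ and at a point where the local type is the expected one, which is immediate from the normal form.

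\textbf{Main obstacle.} I expect the genuine difficulty to be the gluing step in the construction of $X$: the inductive hypothesis gives Saito vector fields on each $S_{k}$, but a vector field that is simultaneously optimal at \emph{every} level of the desingularization is a strong global condition, and one must verify that the local choices at the points $t_{2},\dots,t_{M}$ are compatible with a single $1$-jet along $D_1$ — i.e. that the interpolation problem for the separatrix divisor and the contact order with the radial vector field has a solution realizing the \emph{exact} value of $\epsilon_{1}$ prescribed by $\left(\mathcal{H}\right)$, not just an admissible one. This is precisely the point where the mutual exclusivity in \eqref{egaliteSuper} — one and only one of the two branches $\Delta_1=0$, $\Delta_1=1$ is admissible — must be matched by the geometry, and where genericity of $S$ is used to rule out the accidental coincidences (extra invariant curves tangent to $S$, resonances) that would otherwise raise the multiplicity. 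Everything else — the parity bookkeeping, the reduction to $\left(\mathcal{H}\right)$, and part (2) — should follow formally from the results already established.
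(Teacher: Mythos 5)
Your proposal has a genuine gap in each part, and it also misses the way the two parts are intertwined in the paper's induction.

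For part (1), the ``standard interpolation argument'' you invoke to glue the inductively obtained local Saito vector fields $Y_{k}$ along $D_{1}$ into a single germ $X$ downstairs is not standard; it is the whole difficulty. The obstruction to prescribing the local behaviour of $X^{E_{1}}$ simultaneously at the points $t_{2},\dots,t_{M}$ is measured by $H^{1}\left(D_{1},\mathfrak{X}_{S}\right)$, whose dimension $\sigma\left(S\right)$ is precisely the (generally nonzero) quantity the paper is computing, so one cannot freely choose the $1$-jet of $X^{E_{1}}$ along $D_{1}$ and push down. You flag this honestly as the main obstacle, but you do not resolve it. The paper proceeds differently: it starts from an adapted basis $\left\{ X_{1},X_{2}\right\} $ of $S$ downstairs (Theorem \ref{SaitoBasisForm}) and propagates optimality of a generic member along each branch of the desingularization tree; the only delicate transitions are at the class-$\mathfrak{2}$ curves, where the two members of an adapted basis have distinct valuations, and there a dedicated lemma produces a vector field optimal at two consecutive class-$\mathfrak{2}$ points. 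Crucially, that lemma is proved by applying part (2) of the theorem inductively --- one adjoins the auxiliary smooth curve $l$, takes an adapted basis of the augmented curve, and divides by an equation of $l$ via Saito's criterion --- so parts (1) and (2) must be run in a single induction, which your plan does not do.

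For part (2), your parity argument only disposes of the easy half. If $\nu\left(S\right)$ is even, then $\nu\left(S\cup l\right)$ is odd and $S\cup l$ indeed cannot be of type $\left(\mathfrak{E}_{d}^{\prime}\right)$; but if $\nu\left(S\right)$ is odd, then $\nu\left(S\cup l\right)$ is even and type $\left(\mathfrak{E}_{d}^{\prime}\right)$ is a priori possible. The types $\left(\mathfrak{O}\right)$ and $\left(\mathfrak{O}_{d}\right)$ are dismissed quickly, but the case where $S$ is of type $\left(\mathfrak{O}_{d}^{\prime}\right)$ is the substance of the proof: there the paper must choose $l$ \emph{non-generically} (tangent at order $1$ to a branch of $S$ when $n_{1}^{S}>0$, or attached to a specific $S_{i}$ when $n_{1}^{S}=0$) and derive a contradiction from the valuation identities \eqref{super.equation} and \eqref{equation.bien}. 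Your assertion that part (2) ``is essentially a parity observation'' is therefore false, and the witness $l$ in the statement cannot in general be taken to be a generic smooth curve.
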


\begin{proof}
The proof is an induction on the length of the desingularization of
$S.$ 

If the length is zero, then $S$ is a smooth curve. In some coordinates
$\left(x,y\right)$ such that $S=\left\{ x=0\right\} $, the vector
field $X=\partial_{x}$ is Saito for $S.$ Moreover, if $l$ is the
line $\left\{ y=0\right\} ,$ then the family $\left\{ x\partial_{x},y\partial_{y}\right\} $
is an adapted basis for $S$ which is not of type $\left(\mathfrak{E}_{d}^{\prime}\right).$

We suppose now that the length of the desingularization of $S$ is
strictly positive. 
\begin{center}
\begin{figure}[H]
\begin{centering}
\includegraphics[scale=0.3]{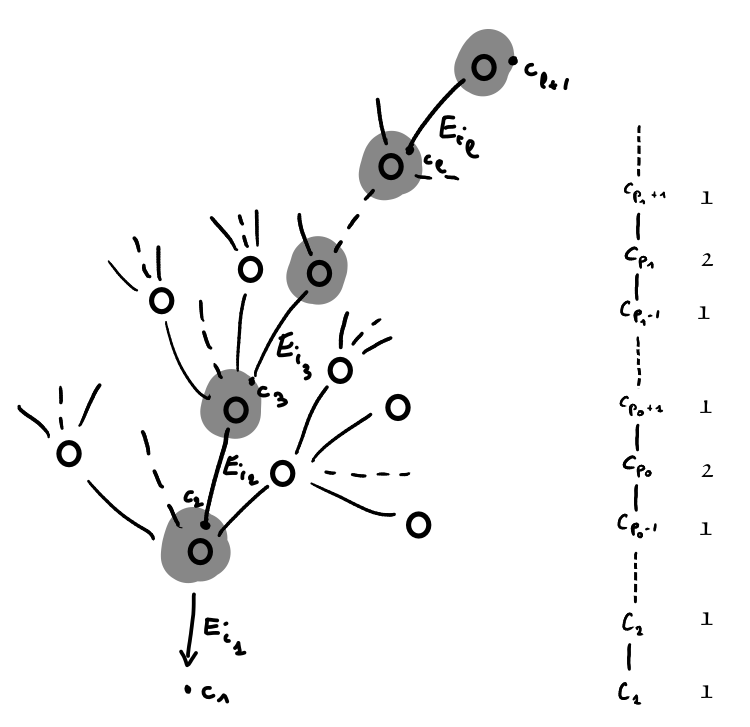}
\par\end{centering}
\caption{\label{fig:A-branch-of}A branch of the tree of desingularization
of $S$ and the type of its successive blowing-ups.}
\end{figure}
\par\end{center}

We consider a \emph{branch }of the tree of desingularization, that
is a sequence of blowing-ups $\left\{ E_{i_{k}}\right\} _{k=1\cdots j}$
where $i_{1}=1$ and $E_{i_{k}}$ is centered at a point $c_{k}$
which belongs to exceptional divisor of $E_{i_{k-1}}.$ We will denote
by $E^{k,l}$ the composition 
\[
E^{k,l}=E_{i_{k}}\circ E_{i_{k+1}}\circ\cdots\circ E_{i_{l}}.
\]
Notice that $c_{1}$ is the origin of $\mathbb{C}^{2}.$ We denote
by $c_{j+1}$ any point of attachement of the strict transform of
$S$ along the exceptional divisor of $E_{i_{j}}.$

Let us consider $\left\{ X_{1},X_{2}\right\} $ an adapted basis for
$S$. We can suppose that the classes of the successive curves along
the branch have the form in Figure \ref{fig:A-branch-of}. Notice
that in this picture, the index $p_{0}$ may be equal to $1$, so
that, the first curve $S_{1}$ is of class $\mathfrak{2}$. Hence,
the description of the branch covers actually the general case.

Any family $\left\{ X_{1}^{E^{1,k}},X_{2}^{E^{1,k}}\right\} $ for
$k\leq p_{0}-1$ is a Saito basis for $\left(S^{E^{1,k}}\right)_{c_{k+1}}$:
indeed, the curve $\left(S^{E^{1,k}}\right)_{c_{k+1}}$ being of class
$\mathfrak{1}$ for $k\leq p_{0}-2$, the Saito basis $\left\{ X_{1}^{E^{1,k}},X_{2}^{E^{1,k}}\right\} $
is also adapted. Taking if necessary a generic combination of $X_{1}$
and $X_{2}$, we can suppose that in the basis $\left\{ X_{1}^{E^{1,p_{0}-1}},X_{2}^{E^{1,p_{0}-1}}\right\} $,
one has 
\begin{equation}
\nu_{c_{p_{0}}}\left(X_{1}^{E^{1,p_{0}-1}}\right)\leq\nu_{c_{p_{0}}}\left(X_{2}^{E^{1,p_{0}-1}}\right),\label{ineg.1}
\end{equation}
and $X_{1}^{E^{1,p_{0}-1}}$ is optimal for $\left(S^{E^{1,p_{0}-1}}\right)_{c_{p_{0}}}.$
Since, the latter is a curve of class $\mathfrak{2},$ there exists
$c\in\mathbb{C}$ such that 
\[
\nu_{c_{p_{0}}}\left(X_{1}^{E^{1,p_{0}-1}}\right)<\nu_{c_{p_{0}}}\left(\underbrace{X_{2}^{E^{1,p_{0}-1}}+cX_{1}^{E^{1,p_{0}-1}}}_{\tilde{X}_{2}}\right)
\]
making of the basis $\left\{ X_{1}^{E^{1,p_{0}-1}},\tilde{X}_{2}\right\} $
an adapted basis for $\left(S^{E^{1,p_{0}-1}}\right)_{c_{p_{0}}}$.
Keeping on blowing-up along the branch until the point $c_{p_{1}}$,
we get a succession of adapted basis. Now, at the point $c_{p_{1}},$
we have to prove that the vector field $X_{1}^{E^{1,p_{1}-1}}$ satisfies
the inequality
\begin{equation}
\nu_{c_{p_{1}}}\left(X_{1}^{E^{1,p_{1}-1}}\right)\leq\nu_{c_{p_{1}}}\left(\tilde{X}_{2}^{E^{p_{0},p_{1}-1}}\right).\label{ineg.2}
\end{equation}
Indeed, if the above inequality does not hold then there is no hope
to obtain a vector field $Y$ which satisfies both inequalities (\ref{ineg.1})
and (\ref{ineg.2}), which means optimal for both curves $\left(S^{E^{1,p_{0}-1}}\right)_{c_{p_{0}}}$
and $\left(S^{E^{1,p_{1}-1}}\right)_{c_{p_{1}}}$. However, we can
establish the lemma below adapted to a chain of curves of respective
types 
\[
\underset{\mathfrak{2}}{c_{p_{0}}}\leftarrow\underset{\mathfrak{1}}{c_{p_{0}+1}}\leftarrow\underset{\mathfrak{1}}{c_{p_{0}+2}}\leftarrow\cdots\leftarrow\underset{\mathfrak{1}}{c_{p_{1}-1}}\leftarrow\underset{\mathfrak{2}}{c_{p_{1}}}.
\]
 
\begin{lem}
There exists a vector field $Y$ optimal for $\left(S^{E^{1,p_{0}-1}}\right)_{c_{p_{0}}}$
such that $Y^{E^{p_{0},p_{1}-1}}$ is optimal for $\left(S^{E^{1,p_{1}-1}}\right)_{c_{p_{1}}}.$
\end{lem}

\begin{proof}
Applying inductively the property $\left(2\right)$ of Theorem \ref{thm.12}
to $\left(S^{E^{1,p_{1}-1}}\right)_{c_{p_{1}}}$ yields a germ of
smooth curve $l$ such that 
\[
\left(\left(S\cup l\right)^{E^{1,p_{1}-1}}\right)_{c_{p_{1}}}
\]
is not of type $\left(\mathfrak{E}_{d}^{\prime}\right)$. Since $\left(S^{E^{1,p_{0}-1}}\right)_{c_{p_{0}}}$
is of class $\mathfrak{2}$ then $\left(\left(S\cup l\right)^{E^{1,p_{0}-1}}\right)_{c_{p_{0}}}$
is of classe $\mathfrak{1}.$ Let us consider a an adapted basis $\left\{ Y_{1},Y_{2}\right\} $
for $\left(\left(S\cup l\right)^{E^{1,p_{0}-1}}\right)_{c_{p_{0}}}$.
Since the latter is of class $\mathfrak{1}$, one has 
\[
\nu_{c_{p_{0}}}\left(Y_{1}\right)=\nu_{c_{p_{0}}}\left(Y_{2}\right).
\]
The vector fields $Y_{1}$ and $Y_{2}$ leave invariant the smooth
curve $l^{E^{1,p_{0}-1}}.$ Thus there exists a germ of analytic function
$\phi$ such that the vector field 
\[
Y_{1}-\phi Y_{2}
\]
can be divided by a reduced equation $L$ of the curve $l^{E^{1,p_{0}-1}}.$
Therefore, according to the criterion of Saito \cite{MR586450}, the
curve $\left(S^{E^{p_{0}-1}}\right)_{c_{p_{0}}}$ admits a Saito basis
of the form 
\[
\left\{ \tilde{Y}_{1}=\frac{Y_{1}-\phi Y_{2}}{L},Y_{2}\right\} .
\]
Notice that $Y_{2}$ is still tangent to $l^{E^{p_{0}-1}}$ and that
$\nu_{c_{p_{0}}}\left(\tilde{Y}_{1}\right)<\nu_{c_{p_{0}}}\left(Y_{2}\right).$
In particular, $\tilde{Y}_{1}$ is optimal for $S^{E^{1,p_{0}-1}}.$
Now suppose that 
\[
\nu_{c_{p_{1}}}\left(\tilde{Y}_{1}^{E^{p_{0},p_{1}-1}}\right)\geq\nu_{c_{p_{1}}}\left(Y_{2}^{E^{p_{0},p_{1}-1}}\right)+1.
\]
Then, multiplying by $L^{E^{p_{0},p_{1}-1}}$ leads to
\begin{equation}
\nu_{c_{p_{1}}}\left(L^{E^{p_{0},p_{1}-1}}\tilde{Y}_{1}^{E^{p_{0},p_{1}-1}}\right)\geq\nu_{c_{p_{1}}}\left(Y_{2}^{E^{p_{0},p_{1}-1}}\right)+2\label{equation.Ed'}
\end{equation}
since $\nu_{c_{p_{1}}}\left(L^{E^{p_{0},p_{1}-1}}\right)=1.$ The
family 
\[
\left\{ L^{E^{p_{0},p_{1}-1}}\tilde{Y}_{1}^{E^{p_{0},p_{1}-1}},Y_{2}^{E^{p_{0},p_{1}-1}}\right\} 
\]
is a Saito basis for $\left(\left(S\cup l\right)^{E^{1,p_{1}-1}}\right)_{c_{p_{1}}}$.
However, the inequality (\ref{equation.Ed'}) implies that the latter
curve is of type $\left(\mathfrak{E}_{d}^{\prime}\right)$, which
is a contradiction with the choice of $l$. Therefore, one has 
\[
\nu_{c_{p_{1}}}\left(\tilde{Y}_{1}^{E^{p_{0},p_{1}-1}}\right)\leq\nu_{c_{p_{1}}}\left(Y_{2}^{E^{p_{0},p_{1}-1}}\right)
\]
and $\tilde{Y}_{1}$ satisfies the lemma.
\end{proof}
The property established in the lemma is also satisfied by $X_{1}^{E^{1,p_{1}-1}}$
since one can write 
\[
X_{1}^{E^{1,p_{1}-1}}=a\tilde{Y_{1}}+bY_{2}
\]
where $a$ is a unity. Thus, $X_{1}^{E^{1,p_{1}-1}}$ is optimal for
$\left(S^{E^{1,p_{1}-1}}\right)_{c_{p_{1}}}$ and, repeating the arguments
along the whole branch, we can see that the optimality property propagates.

Finally, for any branch $B$, we consider a vector field $X_{B}$
optimal along the branch $B$ and a generic combination of the form
\[
\sum\alpha_{B}X_{B},\ \alpha_{B}\in\mathbb{C}
\]
The latter is a Saito vector field for $S,$ which finishes the proof
of property $\left(1\right).$ 

Now, let us prove the second statement of Theorem \ref{thm.12}. Recall
that $X_{1}$ being Saito, its topological data provide an admissible
solution of the system $\left(\mathcal{H}\right)$. Since $S$ is
an union of smooth curves, for any $i=2,\ldots,M,$ 
\[
\delta_{i}^{S}=\Delta_{1}^{S}.
\]
In particular, the following relation holds

\begin{equation}
\epsilon_{1}^{S}=\frac{n_{1}}{2}+\underset{\nu_{1}}{\left\lfloor \begin{array}{c}
\Delta_{1}^{S}\\
\frac{1}{2}
\end{array}\right.}+\frac{M-1}{2}\Delta_{1}^{S}-\sum_{k=2}^{M}\underset{\nu_{k}-\Delta_{1}^{S}}{\left\lfloor \begin{array}{c}
\Delta_{k}^{S}\\
\frac{1}{2}
\end{array}\right.}.\label{super.equation}
\end{equation}
Suppose that $\nu\left(S_{1}\right)$ is even, then for any smooth
curve $l,$ the valuation $\nu\left(S_{1}\cup l\right)$ is odd, thus
$S\cup l$ cannot be of type $\left(\mathfrak{E}_{d}^{\prime}\right).$
Hence, we may suppose $\nu\left(S_{1}\right)$ odd. If $S$ is of
type $\left(\mathfrak{O}_{d}\right)$ then for any generic smooth
curve $S\cup l$ is of type $\left(\mathfrak{E}_{d}\right)$ and not
of type $\left(\mathfrak{E}_{d}^{\prime}\right).$ If $S$ is of type
$\left(\mathfrak{O}\right)$ then $S\cup l$ is of type $\left(\mathfrak{E}\right).$
Thus, we can also suppose that $S$ is of type $\left(\mathfrak{O}_{d}^{\prime}\right)$.
It remains a couple of cases to investigate
\begin{casenv}
\item $n_{1}^{S}>0.$ Let $l_{1}$ be some smooth component in $S$ attached
to $D_{1}$ and $l$ be germ of smooth curve tangent to $l_{1}$ at
order $1.$ We assert that $S\cup l$ cannot be of type $\left(\mathfrak{E}_{d}^{\prime}\right)$.
Indeed, if it was so, then the multiplicity of its Saito vector field
$X_{1}^{S\cup l}$ would be equal to
\begin{equation}
\nu\left(X_{1}^{S\cup l}\right)=\frac{\nu\left(S_{1}\right)+1}{2}-1=\frac{\nu\left(S_{1}\right)-1}{2}=\nu\left(X_{1}^{S}\right)\label{equation.n10}
\end{equation}
which is exactly the multiplicity of a Saito vector field for $S.$
However, one can obtain the topology of $X_{1}^{S\cup l}$ from the
one of $X_{1}^{S}$ provided that $S$ is of type $\left(\mathfrak{O}_{d}^{\prime}\right)$
and $S\cup l$ is of type $\left(\mathfrak{E}_{d}^{\prime}\right)$.
As depicted in Figure \ref{fig:Topology-of-the-1} it consists in
replacing the invariant smooth curve $l_{1}$ by two tangent smooth
curves $l$ and $l_{1}$ that are transverse after the first blowin-up.
\begin{figure}
\begin{centering}
\includegraphics[scale=0.3]{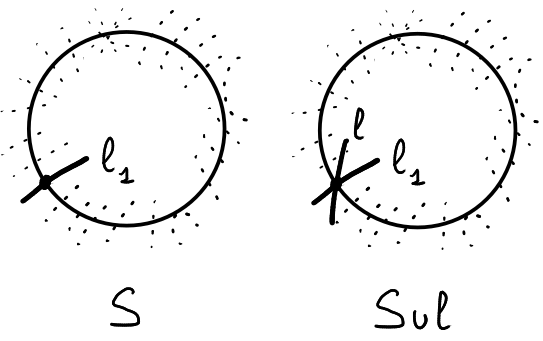}
\par\end{centering}
\caption{\label{fig:Topology-of-the-1}Topology of the Saito vector fields
of $S$ and $S\cup l$.}
\end{figure}
In the process, it can be seen that the valuation of the associated
vector field increases by one, which contradicts the equality (\ref{equation.n10}).
\item $n_{1}^{S}=0.$ Since $S$ is of type $\left(\mathfrak{O}_{d}^{\prime}\right)$,
it follows from \cite{genzmer2020saito} that one has 
\[
\epsilon_{1}^{S}=2-\sum_{k=2}^{M}\Delta_{k}^{S}.
\]
Combining with the relation \ref{super.equation} yields
\begin{equation}
\sum_{k=2}^{M}\underset{\nu_{k}\qquad\quad}{\left\lfloor \begin{array}{c}
0\\
\Delta_{k}^{S}-\frac{1}{2}
\end{array}\right.}=\frac{3}{2}.\label{equation.bien}
\end{equation}
Let us consider $l$ a germ of smooth curve such that $l^{E_{1}}$
is attached to $S_{2}$ and suppose that $S\cup l$ is of type $\left(\mathfrak{E}_{d}^{\prime}\right)$.
Applying the same arguments as above leads to 
\[
\sum_{k=3}^{M}\underset{\nu_{k}\qquad\quad}{\left\lfloor \begin{array}{c}
0\\
\Delta_{k}^{S\cup l}-\frac{1}{2}
\end{array}\right.}+\underset{\nu_{2}+1\qquad\quad}{\left\lfloor \begin{array}{c}
0\\
\Delta_{2}^{S\cup l}-\frac{1}{2}
\end{array}\right.}=2.
\]
However, for any $k\neq2,$ one has $\Delta_{k}^{S\cup l}=\Delta_{k}^{S}$
since $l$ is attached to $S_{2}$ and thus does not \emph{affect
}the curves $S_{k}$ for $k\neq2.$ Combining the two relations above
yields
\[
\underset{\nu_{2}+1\qquad\quad}{\left\lfloor \begin{array}{c}
0\\
\Delta_{2}^{S\cup l}-\frac{1}{2}
\end{array}\right.}-\frac{1}{2}=\underset{\nu_{2}\qquad\quad}{\left\lfloor \begin{array}{c}
0\\
\Delta_{2}^{S}-\frac{1}{2}
\end{array}\right.}.
\]
In particular, if $\nu_{2}$ is odd, then $\Delta_{2}^{S}=0.$ Thus,
if for any germ of smooth curve $l$ such that $l^{E_{1}}$ is attache
to $S_{i}$ for $i=2,\ldots,k$, the curve $S\cup l$ is of type $\left(\mathfrak{E}_{d}^{\prime}\right)$,
then we have the following alternative : either $\nu_{i}$ is even
or $\Delta_{i}^{S}=0$. But the latter contradicts (\ref{equation.bien}).
\end{casenv}
\end{proof}

\section{Number of moduli of $S.$}

Let us consider $\mathfrak{X}_{S}$ the sheaf of vector fields of
base $D_{1}=E_{1}^{-1}\left(0\right)$ whose stack $\left(\mathfrak{X}_{c}\right)_{p}$
is the set of germ of vector fields tangent to $E_{1}^{-1}\left(S\right).$
The cohomology of this sheaf is of finite dimension and we will denote
this dimension by $\sigma\left(S\right)$, 
\[
\sigma\left(S\right)=\dim_{\mathbb{C}}H^{1}\left(D_{1},\mathfrak{X}_{S}\right).
\]

Let us denote by $\mathfrak{T}^{k}\left(S\right)$ the tangency locus
between the strict transform $\left(E^{1,k}\right)^{\star}\left(S\right)$
of $S$ by $E^{1,k}$ and the exceptional divisor of the latter, 
\[
\mathfrak{T}^{k}\left(S\right)=\textup{Tan}\left(\left(E^{1,k}\right)^{\star}\left(S\right),\left(E^{1,k}\right)^{-1}\left(0\right)\right)
\]

According to \cite{YoyoBMS}, the number of moduli of $S$ - that
is the generic dimension of its moduli space - is equal to 
\begin{equation}
\textup{number of moduli of  \ensuremath{S}}=\sum_{k}\sum_{c\in\mathfrak{T}^{k}\left(S\right)}\sigma\left(\left(E^{1,k}\right)^{-1}\left(S\right)_{c}\right)\label{dim.moduli}
\end{equation}
where $\left(E^{1,k}\right)^{-1}\left(S\right)_{c}$ is the germ at
$c$ of the total transform of $S$ by $E^{1,k}.$ Moreover, following
\cite{genzmer2020saito}, the number $\sigma\left(S\right)$ can be
computed the following way : if $S$ is \emph{generic} in its moduli
space then

\begin{equation}
\sigma\left(C\right)=\left\{ \begin{array}{lr}
\text{\ensuremath{{\displaystyle \frac{\left(\nu\left(C\right)-2\right)\left(\nu\left(C\right)-4\right)}{4}}}} & \textup{if \ensuremath{C} is of type \ensuremath{\left(\mathfrak{E}\right)}}\\
{\displaystyle \ensuremath{\frac{\left(\nu\left(C\right)-3\right)^{2}}{4}}} & \textup{if \ensuremath{C} is of type \ensuremath{\left(\mathfrak{O}\right)}}\\
{\displaystyle \frac{\left(\nu\left(C\right)-2\right)\left(\nu\left(C\right)-4\right)}{4}-1+\epsilon_{1}^{C}+\sum_{k=2}^{M^{C}}\Delta_{k}^{C}} & \textup{if \ensuremath{C} is of type \ensuremath{\left(\mathfrak{E}_{d}\right)}}\\
{\displaystyle \frac{\left(\nu\left(C\right)-3\right)^{2}}{4}-2+\epsilon_{1}^{C}+\sum_{k=2}^{M^{C}}\Delta_{k}^{C}} & \textup{if \ensuremath{C} is of type \ensuremath{\left(\mathfrak{O}_{d}\right)}}
\end{array}\right.\label{dim.coho}
\end{equation}

Thus the expression of $\sigma\left(C\right)$ depends firstly, on
the type of the curve $C,$ secondly, on some topological data associated
to $C$ and its Saito foliation. When $C$ is an union of germs of
smooth curves, these data can be obtained from an admissible solution
of $\left(\mathcal{H}\right)$ since Proposition \ref{ExistenceAndUnicity}
and Theorem \ref{Saito.est.admiss} assert that this solution is unique
and given precisely by the topological data of a Saito foliation for
$C$. 

Therefore, to compute the number of moduli a curve $S$ which is an
union of smooth curves, one just has to solve the system $\left(\mathcal{H}\right)$
with an admissible solution for any curve $\left(E^{1,k}\right)^{-1}\left(S\right)_{c}$
and then to apply the formula (\ref{dim.coho}) to get the contribution
of each curve $\left(E^{1,k}\right)^{-1}\left(S\right)_{c}$ in the
expression (\ref{dim.moduli}).
\begin{example}
Let us consider the curve $S$ whose proximity matrix is given by
\[
P=\left(\begin{array}{ccc}
1 & -1 & 0\\
0 & 1 & -1\\
0 & 0 & 1
\end{array}\right)
\]
and $n_{1}^{S}=4,~n_{2}^{S}=2,\ n_{3}^{S}=4.$ Then we get the data
summarized in Table \ref{tab:Algorithm-to-compute}
\begin{table}
\begin{tabular}{cccc}
\noalign{\vskip\doublerulesep}
 & $S_{1}$ & $S_{2}\cup D_{1}$ & $S_{3}\cup D_{2}$\tabularnewline[\doublerulesep]
\noalign{\vskip\doublerulesep}
\hline 
\noalign{\vskip\doublerulesep}
Type & $\left(\mathfrak{E}_{d}\right)$ & $\left(\mathfrak{O}_{d}\right)$ & $\left(\mathfrak{O}_{d}\right)$\tabularnewline[\doublerulesep]
\noalign{\vskip\doublerulesep}
\hline 
\noalign{\vskip\doublerulesep}
Saito Picture & \includegraphics[scale=0.1]{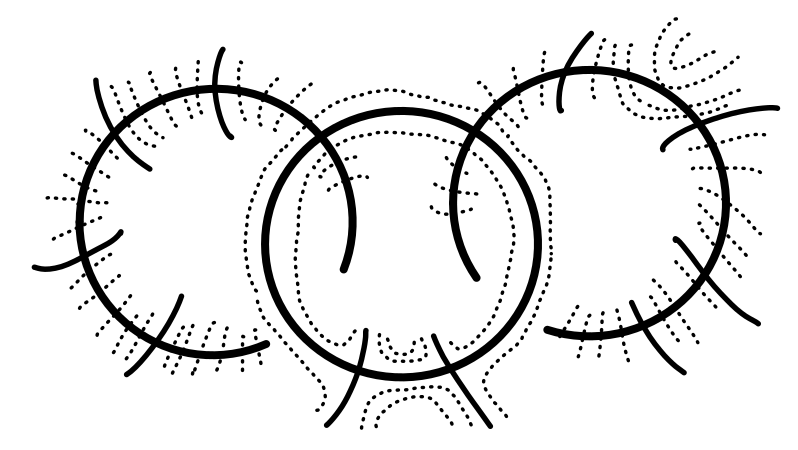} & \includegraphics[scale=0.1]{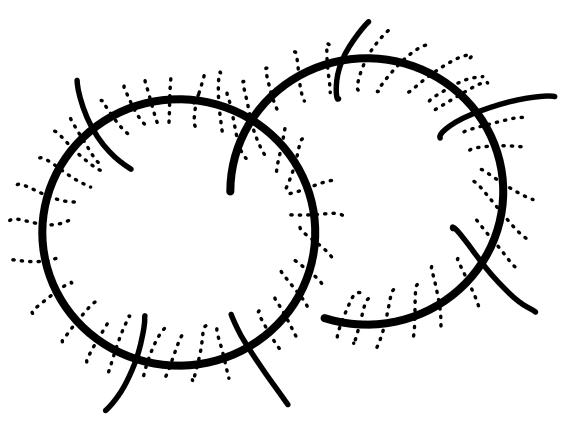} & \includegraphics[scale=0.1]{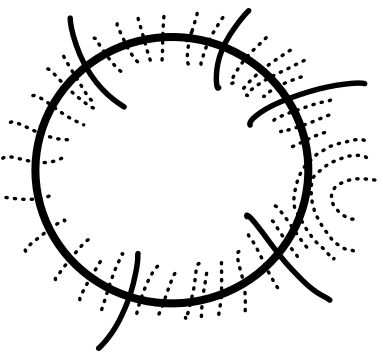}\tabularnewline[\doublerulesep]
\noalign{\vskip\doublerulesep}
\hline 
\noalign{\vskip\doublerulesep}
$P$ & $\left(\begin{array}{ccc}
1 & -1 & 0\\
0 & 1 & -1\\
0 & 0 & 1
\end{array}\right)$ & $\left(\begin{array}{cc}
1 & -1\\
0 & 1
\end{array}\right)$ & $\left(1\right)$\tabularnewline[\doublerulesep]
\noalign{\vskip\doublerulesep}
\hline 
\noalign{\vskip\doublerulesep}
$\nu$ & $10$ & $7$ & $5$\tabularnewline[\doublerulesep]
\noalign{\vskip\doublerulesep}
\hline 
\noalign{\vskip\doublerulesep}
$\nu\left(S_{k}\right)$ & $10,~6,\ 4$ & $7,\ 4$ & $5$\tabularnewline[\doublerulesep]
\noalign{\vskip\doublerulesep}
\hline 
\noalign{\vskip\doublerulesep}
$n_{i}^{S}$ & $4,~2,~4$ & $3,~4$ & $5$\tabularnewline[\doublerulesep]
\noalign{\vskip\doublerulesep}
\hline 
\noalign{\vskip\doublerulesep}
$\Delta_{i}^{S}$ & $\left(0,1,0\right)$ & $\left(0,0\right)$ & $\left(0\right)$\tabularnewline[\doublerulesep]
\noalign{\vskip\doublerulesep}
\hline 
\noalign{\vskip\doublerulesep}
$\delta_{i}^{S}$ & $\left(0,0,1\right)$ & $\left(0,0\right)$ & $\left(0\right)$\tabularnewline[\doublerulesep]
\noalign{\vskip\doublerulesep}
\hline 
\noalign{\vskip\doublerulesep}
$\mathfrak{S}^{S}$ & $\left(\begin{array}{c}
5\\
4\\
2
\end{array}\right)$ & $\left(\begin{array}{c}
4\\
2
\end{array}\right)$ & $\left(3\right)$\tabularnewline[\doublerulesep]
\noalign{\vskip\doublerulesep}
\hline 
\noalign{\vskip\doublerulesep}
$\mathcal{E}^{S}$ & $\left(\begin{array}{c}
1\\
2\\
2
\end{array}\right)$ & $\left(\begin{array}{c}
2\\
2
\end{array}\right)$ & $\left(3\right)$\tabularnewline[\doublerulesep]
\noalign{\vskip\doublerulesep}
\hline 
\noalign{\vskip\doublerulesep}
{\large{}$\sigma\left(S\right)$} & {\large{}13} & {\large{}4} & {\large{}2}\tabularnewline[\doublerulesep]
\noalign{\vskip\doublerulesep}
\end{tabular}

\caption{\label{tab:Algorithm-to-compute}Algorithm to compute the number of
moduli of $\Sigma.$}
\end{table}
\end{example}

Therefore, the number of moduli of $\Sigma$ is equal to 
\[
13+4+2=19.
\]

We implemented, among other procedures, this algorithm on Sage 9.{*}
. It can be found here
\begin{quote}
https://perso.math.univ-toulouse.fr/genzmer/
\end{quote}
It is a software called \emph{CourbePlane.}

\bibliographystyle{plain}
\bibliography{/home/genzmer/ownCloud/Article/Biblio/Bibliographie}

\end{document}